\def\cleardoublepage{\clearpage\if@twoside \ifodd\c@page\else
  \hbox{}
  \thispagestyle{empty}
  \newpage
  \if@twocolumn\hbox{}\newpage\fi\fi\fi}
\theoremstyle{plain} 
\newtheorem{theorem}[subsection]{Theorem}
\newtheorem*{teo}{Theorem}
\newtheorem{corollary}[subsection]{Corollary}
\newtheorem{proposition}[subsection]{Proposition}
\newtheorem*{propo}{Proposition}
\newtheorem{lemma}[subsection]{Lemma}
\theoremstyle{remark}
\newtheorem*{remark}{Remark}
\theoremstyle{definition}
\newtheorem*{definition}{Definition}
\newtheorem*{map}{Map}
\newtheorem*{notation}{Notation}
\newtheorem*{algorithm}{Algorithm}
\newcommand{\diamantei}{\underset{i}{\diamond}}
\newcommand{\diamanteuno}{\underset{1}{\diamond}}
\newcommand{\circulito}{\underset{i}{\circ}}
\newcommand{\circR}{\underset{R}{\circ}}
\newcommand{\tensorE}{\underset{{\tiny\hbox{$E$}}}{\otimes}}
\newcommand{\tensork}{\underset{{\tiny\hbox{$k$}}}{\otimes}}
\newcommand{\cnk}{\left( \begin{array}{c} n+1 \\ l  \end{array}\right)}
\newcommand{\cnnk}{\left( \begin{array}{c} n-1 \\ l \end{array}\right)}
\newcommand{\cck}{\left( \begin{array}{c} n \\ l  \end{array}\right)}
\newcommand{\matrizDcero}{\scalebox{0.5}
{$\begin{pmatrix} 0&0\\D_0&0 \end{pmatrix} $}}
\newcommand{\matrizDuno}{\scalebox{0.5}
{$\begin{pmatrix} 0&0\\ D_1&0\end{pmatrix}$}}
\newcommand{\matrizDn}{\scalebox{0.5}
{$\begin{pmatrix} 0&0\\ D_n&0 \end{pmatrix}$}}
\newcommand{\pnl}{
\left( \begin{array}{c} n \\ l \end{array}\right)
-
\left( \begin{array}{c} n \\ l-1  \end{array}\right)
}
\newcommand{\qnk}{
\left( \begin{array}{c} n+1 \\ l \end{array}\right)
-
\left( \begin{array}{c} n+1 \\ l-1  \end{array}\right)
-
\left( \begin{array}{c} n-1 \\ l-1  \end{array}\right)
+
\left( \begin{array}{c} n-1 \\ l-2  \end{array}\right)
}
\newcommand{\mnz}{\left( \begin{array}{c} n-1 \\ l
    \end{array}\right)}
\title{The Lie module structure on the Hochschild cohomology groups of
  monomial algebras with radical square zero}
\author{Selene S{\'a}nchez-Flores}
\address{
Institut de Math{\'e}matiques et de Mod{\'e}lisation de Montpellier \\
Universit{\'e} de Montpellier II \\ 
Place Eug{\`e}ne Bataillon\\
F-34095 Montpe${}$llier Cedex 5\\ 
France.}
\email{sanchez@math.univ-montp2.fr}                       
\begin{document}
\maketitle

\begin{abstract}
We study the Lie module structure given by the Gerstenhaber bracket 
on the Hochschild cohomology groups of a monomial algebra 
with ra${}$dical square zero. The description of such Lie module 
structure will be given in terms of the combinatorics of the quiver. 
The Lie module structure will be related 
to the classification of finite dimensional
modules over simple Lie algebras when the quiver is given by the two
loops and the ground field is the complex numbers.
\end{abstract}


\section*{Introduction.}
Let $A$ be an associative unital $k$-algebra where $k$ is a field.
The {\it{$n^{th}$ Hochschild cohomology group of $A$ }}, 
denoted by $HH^n(A)$, refers to 
\[
HH^n(A):= HH^n(A,A)=\, Ext_{A^e}^n(A,A)
\]
where $A^e$ is the enveloping algebra $A^{op} \otimes_k A$ of $A$. 
Thus, for example, $HH^0(A)$ is the center of $A$ 
and the first Hochschild cohomology group $HH^1(A)$
is the vector space of the outer derivations. 
Note that the first Hochschild cohomology group has a 
Lie algebra structure given by the commutator bracket. 
In \cite{gersten}, Gerstenhaber introduced 
two operations on the Hochschild cohomology groups: 
the cup product and the bracket 
\[
[\, - \, , \, - \,]: 
HH^n(A) \times HH^m(A) \longrightarrow HH^{n+m-1}(A) .
\] 
He proved that the {\it{Hochschild cohomology of $A$ }}, 
$$
HH^{*}(A) := \displaystyle{\bigoplus_{n=0}^\infty \,  HH^{n}(A)} \, ,
$$
provided with the cup product is a graded commutative algebra.
Furthermore, he demonstrated that $HH^{*+1}(A)$ 
endowed with the Gerstenhaber bracket 
has a graded Lie algebra structure. 
Consequently, $HH^1(A)$ is a Lie algebra and  
$HH^n(A)$ is a Lie module over $HH^1(A)$. 
As a matter of fact, the Gerstenhaber bracket restricted to $HH^1(A)$ 
is the commutator Lie bracket of the outer derivations. 
Moreover, the cup product and the Gerstenhaber bracket endow 
$HH^*(A)$ with the so-called Gerstenhaber algebra structure. 

Besides, it was shown that the algebra structure on $HH^*(A)$ 
is invariant under derived equivalence \cite{happel, rickard}.
In addition, in \cite{keller}, Keller proved that the Gerstenhaber
bracket on $HH^{*+1}(A)$ is preserved under derived equivalence. 
Therefore, the Lie module structure on $HH^n(A)$ over $HH^1(A)$ is
also an invariant under derived equivalence.

Understanding both the graded commuative algebra and 
the graded Lie algebra structure,  
on the Hochschild cohomology of algebras 
is a difficult assigment. 
Different techniques have been used in order to:
$(1)$ describe the Hochschild cohomology algebra (or ring) 
for some algebras, 
\cite{holm, cibilssolotar,cibils, erdmannsnashall, erdmannholm, 
siegelwitherspoon, mariano, erdmannholmsnashall,cagliero,eu,xu}; 
$(2)$ study the Hochschild cohomology ring modulo nilpotence,
\cite{greensnashallsolberg,greensnashallsolberg2,greensnashall} and 
$(3)$ compute the Gerstenhaber bracket
\cite{bustamante,eu2}.

On the other hand, C. Strametz studied, in \cite{strametz}, 
the Lie algebra structure on the first Hochschild cohomology group 
of monomial algebras. 
She accomplishes to describe such Lie algebra structure in terms of 
the combinatorics of the monomial algebras. 
Moreover, she relates such description to the
algebraic groups which appear in Guil-Asensio and Saor{\'\i}n's study of
the outer automorphisms \cite{saorin}. 
In \cite{strametz}, Strametz also gave criteria for simplicity 
of the first Hochschild cohomology group.  

In this paper we are interested in the Lie module structure 
on the Hochschild cohomology groups induced by the Gerstenhaber bracket.
This approach was suggested by C. Kassel 
and motivated by the work of C. Strametz. 
The aim of this paper is to describe the Lie module structure 
on the Hochschild cohomology groups  
for monomial algebras of (Jacobson) radical square zero. 
Recall that a {\it{monomial algebra of radical square zero}} 
is the quotient of the path algebra of a quiver $Q$ 
by the two-sided ideal generated by the set of paths of length two.
We will use the combinatorics of the quiver 
in order to describe the Lie module structure. 
Moreover, for the case of the two loops quiver,  
we relate such Lie module structure of $HH^n(A)$ 
to the classification of the (finite-dimensional) irreducible Lie modules 
over $sl_2$ when the ground field is the complex numbers. 

The Hochschild cohomology groups of those algebras 
have been described in \cite{cibils} 
using the combinatorics of the quiver. 
Such description enables to prove 
that the cup product of elements of 
positive degree is zero when 
$Q$ is not an oriented cycle. 
In this paper, we use Cibils' description of $HH^n(A)$  
in order to study the Lie module structure on 
the Hochschild cohomology groups. 
First, we reformulate the Gerstenhaber bracket 
for the realization of the Hochschild 
cohomology groups obtained 
through the computations in \cite{cibils}. 
In the first section 
we construct two quasi-ismorphisms between the Hochschild cochain
complex and the complex induced by the reduced projective resolution. 
Then in the seco${}$nd section, 
using such quasi-isomorphisms, we introduce a new bracket; 
which coincides with the Gerstenhaber bracket. In the third
section, we use the combinatorics of the quiver to describe the
Gerstenhaber bracket.

In the last section, 
we study a particular case: the monomial algebra of
radical square zero given by the two loops quiver. 
For this algebra, we prove that  
$HH^1(A)$ is isomorphic as a Lie algebra 
to $gl_2 \mathbb{C}$ and then we identify a copy of 
$sl_2 \mathbb{C}$ in $HH^1(A)$.
In order
to decribe $HH^n(A)$ as a Lie module over $HH^1(A)$, we start studying 
the Lie module structure of $HH^n(A)$ over $sl_2 \mathbb{C}$. 
In this article, we determine the decomposition 
of $HH^n(A)$ into direct sum of
irreducible modules over $sl_2 \mathbb{C}$. 
Moreover, we show that 
such decomposition can be
obtained by an algorithm. 
In the following table we illustrate the decomposition for the
Hochschild cohomology groups of degrees between 2 and 7. 
We denote by $V(i)$ the unique irreducible Lie module of dimension
$i+1$ over $sl_{2} \mathbb{C}$.

\[
\scalebox{0.9}
{$
\begin{array}{c||ccccccccc}
n & 
V(0)&V(1)&V(2)&V(3)&V(4)&V(5)&V(6)&V(7)&V(8) \\
\hline \hline 
\, & 
\,  & \, & \, & \, & \, & \, & \, & \, & \, \\
HH^2(A) & 
\,  & 1  & \, & 1  & \, & \, & \, & \, & \, \\ 
\, & \,  & \,  & \, & \,  & \, & \, & \, & \, & \, \\
HH^3(A) & 
1   & \, & 2  & \,  & 1 & \, & \, & \, & \,  \\ 
\, & 
\,  & \,  & \, & \,  & \, & \, & \, & \, & \,\\
HH^4(A) & 
\,  & 3  & \, & 3  & \, & 1 & \, & \, & \,  \\ 
\, & 
\,  & \,  & \, & \,  & \, & \, & \, & \,& \,  \\
HH^5(A) & 
3  &  \,  & 6 & \, & 4  & \, & 1 & \, & \,  \\ 
\, & 
\,  & \,  & \, & \,  & \, & \, & \, & \, & \, \\
HH^6(A) & 
\,  & 9  & \, & 10 & \, & 5  & \, & 1  & \, \\
\, & 
\,  & \,  & \, & \,  & \, & \, & \, & \,& \,  \\
HH^7(A) & 
9  &  \,  & 19 & \, & 15  & \, & 6 & \, & 1  \\ 
\end{array}
$} 
\]
$\quad$  
\\  

In the above table, let us remark that the three 
last diagonal form a component of the Pascal triangle. Note also
that the integer sequence given by the first and second column are the
same. We will prove that these two remarks are in general true.  
This will enable to show the validity of the algorithm and in
consequence obtain the other diagonals of the table.  Moreover, we
have introduced the sequence of numbers in the 
Encyclopedia of Integer Sequences 
[http://www.research.att.com/~njas/sequences/index.html], 
it appears to be related with two sequences. 
Among these sequence, there is one that represents
the expected saturation of a binary search tree (or BST) on n nodes
times the number of binary search trees on n nodes, or alternatively,
the sum of the saturation of all binary search trees on n
nodes. Another sequence gives the number of standard tableaux of
shapes (n+1,n-1). The two sequences are given by explicit
formulas. 

In a future paper, 
we will apply the same techniques, 
as those we use in this article, 
to prove that the first Hochschild cohomology group 
of the monomial algebra of radical square zero is   
the Lie algebra $gl_{n} \mathbb{C}$ when the quiver 
is given by $n$ loops.  
Moreover, we will determine, as we did for the two loops case,  
the decomposition into direct sum of irreducible 
modules over $sl_{n} \mathbb{C}$ 
but only for the second Hochschild cohomology group.  
We will also be dealing 
with the case when the quiver has no loops and no cycles.


\thanks{{\bf{Acknowledgment.}} This work will be part of my PhD thesis at
the University of Montpellier 2. I am indebted to my
advisor, Professor Claude Cibils, not only for valuable discussions
about the subject and his helpful remarks on this paper, 
but also for his encouragement. I would like to thank the referee for
helpful suggestions in improving this paper. }


\section{ A comparison map beetween 
the bar projective resolution and 
the reduced bar projective resolution.}
In this section, we deal with finite dimensional $k$-algebras 
whose semisimple part (i.e the quotient by its radical) is
isomorphic to a finite number of copies of the field. Monomial
algebras of radical square are a particular case of these algebras. 
\subsection*{Two projective resolutions.}
The usual $A^e$-projective resolution of $A$
used to calculate the Hochschild cohomology groups 
is the standard bar resolution.
The {\it{standard bar resolution}}, that we will denote by $\bold{S}$, 
is given by the following exact sequence: 
\[
\bold{S}:= \qquad \quad 
\cdots 
\rightarrow
A^{\otimes^{n+1}_k}
\stackrel{\delta}{\rightarrow} \, 
A^{\otimes^{n}_k} 
\stackrel{\delta}{\rightarrow} \, 
\cdots  
\stackrel{\delta}{\rightarrow} \,
A^{\otimes^3_k} 
\stackrel{\delta}{\rightarrow} \, 
A \tensork A  
\stackrel{\mu}{\rightarrow}    \,
A  
\rightarrow                    \, 
0
\]
where
$\mu$ is the multplication and 
the $A^e$-morphisms $\delta$ are given by
\[
\delta (x_1 \otimes \dots \otimes x_{n+1})= 
\sum_{i=1}^{n} \, (-1)^{i+1} 
x_1 \otimes \dots \otimes x_ix_{i+1} \otimes \dots \otimes x_{n+1}
\] 
where $x_i \in A$ and $\otimes$ means $\tensork$. 

Now, the $A^e$-projective resolution of $A$ used in \cite{cibils} 
to compute the Hochschild cohomology groups 
of a monomial radical square zero is 
the {\it{reduced bar resolution}}. 
It is 
defined for a finite dimensional $k$-algebra $A$ whose  
Wedderburn-Malcev decomposition is given by the direct sum  
$A=E \oplus r$ where $r$ is the Jacobson radical of $A$ and 
$E \cong A/r \cong k \times k \cdots \times k$. 
In the sequel $A$ denotes an algebra verifying those conditions.  
Let us denote by $\bold{R}$ the reduced bar resolution. 
It is given by the following exact sequence:
\[
\bold{R}:=
\cdots 
\rightarrow 
A \tensorE r^{\otimes^{n+1}_E} \tensorE A  
\stackrel{\delta}{\rightarrow} \, 
A \tensorE  r^{\otimes^{n}_E} \tensorE A 
\stackrel{\delta}{\rightarrow} \, 
\cdots 
\stackrel{\delta}{\rightarrow} \,
A \tensorE r \tensorE A 
\stackrel{\delta}{\rightarrow} \, 
A \tensorE A  
\stackrel{\mu}{\rightarrow}    \, 
A  
\rightarrow                    \, 0
\]
where $\mu$ is the multplication and 
the $A^e$-morphisms $\delta$ are given by
\[
\begin{array}{cl}
\delta (a \otimes x_1 \otimes \dots \otimes x_{n+1} \otimes b) & 
= ax_1 \otimes x_2 \otimes \dots \otimes x_{n+1} \otimes b \\
\, & + \, \sum_{i=1}^{n} \, (-1)^i 
a \otimes x_1 \otimes \dots \otimes x_ix_{i+1} \otimes \dots \otimes b \\
\, & + \, (-1)^{n+1} \, 
a \otimes x_1 \otimes \dots \otimes x_{n} \otimes x_{n+1}b
\end{array}
\] 
where $a,b \in A$, $x_i \in r$ and $\otimes$ means $\tensorE$. 
The proof that this sequence is a projective resolution 
can be found in \cite{cibils2}. 

\subsection*{Comparison maps.}
Theorically, a comparison map exists 
between these two projective resolutions.  
The objective of this section 
is to give an explicit comparison map
between the projective resolutions 
$\bold{S}$ and $\bold{R}$ in both directions. 
Such comparison map will induce 
some quasi-isomorphisms
between the Hochschild cochain complex
and the complex induced by the
reduced bar resolution. 
The explicit calculations 
of these quasi-isomorphisms, 
enables to 
reformulate 
the Gerstenhaber bracket.

In this paragraph, we are going to give two maps of complexes: 
$$
p:\bold{S} \rightarrow \bold{R} \text{ and } 
s:\bold{R} \rightarrow \bold{S}.$$ 
This means we will define maps $(p_n)$ and $(s_n)$ 
such that the next diagram 
\begin{equation}\label{Diagrama}
\xymatrix{
\: \cdots \: A \tensork A^{\otimes^{n+1}_k} \tensork A 
\ar[d]_{p_{n+1}} \ar[r]^\delta & 
A \tensork A^{\otimes^n_k} \tensork A \ar[d]_{p_n} \: \cdots &
A \tensork A \ar[d]_{p_0} \ar[r]^{\mu} & A \ar[d]_{id} \ar[r] & 0 \\
\: \cdots \: A \tensorE r^{\otimes^{n+1}_E} \tensorE A  
\ar[r]^\delta \ar[d]_{s_{n+1}} &
A \tensorE r^{\otimes^n_E} \tensorE A \ar[d]_{s_n} \: \cdots &
A \tensorE A \ar[d]_{s_0} \ar[r]^{\mu}  & A \ar[d]_{id} \ar[r] & 0 \\
\: \cdots \: A \tensork A^{\otimes^{n+1}_k} \tensork A \ar[r]^{\delta} & 
A \tensork A^{\otimes^n_k} \tensork A \: \cdots & 
A \tensork A \ar[r]^\mu & A \ar[r] & 0
}
\end{equation}
commutes. 
\begin{map}[$p_n$]
We define $p_0$ as the linear map given by 
\[
\begin{array}{rlllc}
  p_0: & A \tensork A & \rightarrow  & A \tensorE A & \, \\ 
  \,   & a \tensork b & \mapsto      & a \tensorE b & .
\end{array}
\]
Now, let $n \geq 1$. Define 
\[
p_n: A \tensork A^{\otimes^n_k} \tensork A \rightarrow 
     A \tensorE r^{\otimes^n_E} \tensorE A
\]
as the linear map given by 
\begin{center}
\scalebox{0.9}{$
a \tensork     
x_1 \tensork \cdots \tensork x_i  \tensork \cdots \tensork   x_{n+1} 
\tensork b 
\mapsto
a \tensorE 
\pi(x_1)\tensorE \cdots \tensorE \pi(x_i) \tensorE \cdots \tensorE \pi(x_{n+1})
\tensorE b 
$.}
\end{center}
where $\pi$ denotes the projection map from $A$ to the Jacobson
radical square zero.
Notice that $p_n$ is an $A^e$-morphism for all $n$.  
\end{map}
In order to define the maps $(s_n)$ we introduce some notation.  
In the sequel, let $E_0$ denote a  
complete system of idempotents and orthogonal elements of $E$. 
Note that the set $E_0$ is finite. 
\begin{remark}
Now, consider elements of 
$A \tensorE r^{\otimes^n_E} \tensorE A$ of the form
$$
ae_{j_1} \tensorE \cdots \tensorE e_{j_{i-1}} x_{i-1} e_{j_i} \tensorE
e_{j_i}x_ie_{j_{i+1}} \tensorE e_{j_{i+1}} x_{i+1} e_{j_{i+2}}
\tensorE \cdots \tensorE 
e_{j_{n+1}}b
$$
where each $e_{j_i}$ is in $E_0$, $a,b$ are in $A$ and 
$x_i$ in $r$. 
It is not difficult to see that 
those elements generate the vector space 
$A \tensorE r^{\otimes^n_E} \tensorE A$. Indeed, we have that 
\[
\begin{array}{l}
a \tensorE x_1 \tensorE \cdots \tensorE x_i \tensorE \cdots \tensorE 
x_{n} \tensorE b = \\
\, \\
\displaystyle{\sum_{ j_{1},\dots,j_{n+1} }}
ae_{j_1} \tensorE \cdots \tensorE e_{j_{i-1}} x_{i-1} e_{j_i} \tensorE
e_{j_i}x_ie_{j_{i+1}} \tensorE e_{j_{i+1}} x_{i+1} e_{j_{i+2}} \tensorE
\cdots \tensorE 
e_{j_{n+1}}b 
\end{array}
\]
where the sum is over all $(n+1)$-tuples 
$(e_{j_1},\dots,e_{j_i},\dots,e_{j_{n+1}})$ of elements of $E_0$.
\end{remark}

\begin{map}[$s_n$]
Define $s_0$ as the linear map given by 
\[
\begin{array}{rlllc}
  s_0: & A \tensorE A & \rightarrow  & A \tensork A   & \, \\ 
  \,   & ae \tensorE eb & \mapsto    & ae \tensork eb & .
\end{array}
\]
So we have that
\[
s_0(a \tensorE b)= \displaystyle{\sum_{e \in E_0} ae \tensork eb} \, .
\]
It is well defined because 
$s_0(ae \tensorE b)= ae \tensork eb = s_0(a\tensorE eb)$ for
all $e \in E$. Now, let  $n \geq 1$. Define 
\[
s_n: A \tensorE r^{\otimes^n_E} \tensorE A \rightarrow
     A \tensork A^{\otimes^n_k} \tensork A 
\]  
as the linear map given by 
\[
\begin{array}{l}
ae_{j_1} \tensorE \cdots \tensorE e_{j_{i-1}} x_{i-1} e_{j_i} \tensorE
e_{j_i}x_ie_{j_{i+1}} \tensorE e_{j_{i+1}} x_{i+1} e_{j_{i+2}} \tensorE
\cdots \tensorE 
e_{j_{n+1}}b
\mapsto \\
ae_{j_1} \tensork \cdots \tensork e_{j_{i-1}} x_{i-1} e_{j_i} \tensork
e_{j_i}x_ie_{j_{i+1}} \tensork e_{j_{i+1}} x_{i+1} e_{j_{i+2}} \tensork
\cdots \tensork
e_{j_{n+1}}b 
\end{array}
\]
where each $e_{j_i}$ is in $E_0$. So we have that
\[ 
\begin{array}{l}
s_{n} (a \tensorE x_1 \tensorE \cdots \tensorE x_i \tensorE \cdots \tensorE 
x_{n} \tensorE b )= \\
\, \\
\displaystyle{\sum_{j_{1},\dots,j_{n+1} }}
ae_{j_1} \tensork \cdots \tensork e_{j_{i-1}} x_{i-1} e_{j_i} \tensork
e_{j_i}x_ie_{j_{i+1}} \tensork e_{j_{i+1}} x_{i+1} e_{j_{i+2}} \tensork
\cdots \tensork 
e_{j_{n+1}}b 
\end{array}
\]
where the sum is over all $(n+1)$-tuples 
$(e_{j_1},\dots,e_{j_i},\dots,e_{j_{n+1}})$ of elements of $E_0$. 
Notice that $s_n$ is an $A^e$-morphism. 
\end{map}
\begin{remark}
It is clear that $p_ns_n=id_{A \tensorE r^{\otimes_E^n} \tensorE A}.$
\end{remark}
\begin{lemma}\label{map}
The maps 
$$
p:\bold{S} \rightarrow \bold{R} \text{ and } 
s:\bold{R} \rightarrow \bold{S}$$ 
defined above are maps of complexes. 
\end{lemma}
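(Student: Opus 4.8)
The plan is to verify commutativity of the diagram \eqref{Diagrama} square by square, in each of the two rows separately. Since the bottom square (involving $\mu$ and $\mathrm{id}$) is trivial for both maps — $\mu \circ p_0 = \mu \circ s_0 = \mu$ on the nose, because $p_0$ and $s_0$ merely change the tensor symbol or sum over idempotents that are then multiplied back together — the real content is the identity $\delta \circ p_{n+1} = p_n \circ \delta$ and $\delta \circ s_{n+1} = s_n \circ \delta$ for $n \geq 0$ (with the degenerate versions $\delta \circ p_1 = p_0 \circ \delta$, etc., handled the same way). Both checks are direct term-by-term computations on the generators, so the proof is mostly bookkeeping; I would present it as two short verifications.

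For the map $p$: take a generator $a \tensork x_1 \tensork \cdots \tensork x_{n+1} \tensork b$ of $A \tensork A^{\otimes^{n+1}_k} \tensork A$. Applying $\delta$ in $\bold{S}$ produces the alternating sum with terms $a x_1 \tensork x_2 \tensork \cdots$, the interior terms $a \tensork \cdots \tensork x_i x_{i+1} \tensork \cdots$, and $a \tensork \cdots \tensork x_{n+1} b$; then $p_n$ replaces each surviving middle entry by its image under $\pi$. Applying $p_{n+1}$ first and then $\delta$ in $\bold{R}$ gives the analogous sum with $\pi(x_i)\pi(x_{i+1})$ in the interior and $a\pi(x_1)$, $\pi(x_{n+1})b$ at the ends. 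The two agree provided $\pi(x_i x_{i+1}) = \pi(x_i)\pi(x_{i+1})$, which holds because $\pi: A \to r$ is an algebra surjection in the sense that $r$ is an ideal and $A/r$ has trivial multiplication mod $r$; more precisely $x_i x_{i+1} - \pi(x_i)\pi(x_{i+1}) \in r^2$, and since $r^2 = 0$ (or more generally since the map into $A\tensorE r^{\otimes^n_E}\tensorE A$ kills $r^2$ in an interior slot) the terms coincide. This compatibility of $\pi$ with multiplication, together with the fact that $\pi$ is an $E$-bimodule map so all the outer $A$-actions match, is the only point requiring a moment's care; everything else is formal.

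For the map $s$: take a generator of $A \tensorE r^{\otimes^{n+1}_E} \tensorE A$ written in the refined form from the Remark, with all the idempotents $e_{j_i} \in E_0$ inserted. Applying $s_{n+1}$ replaces every $\tensorE$ by $\tensork$; applying $\delta$ in $\bold{S}$ afterwards multiplies adjacent factors over $k$. Applying $\delta$ in $\bold{R}$ first multiplies adjacent factors over $E$ and then $s_n$ changes the symbols. Because in the refined form the element sitting to the left of each $\tensorE$ already ends in the idempotent $e_{j_i}$ and the element to its right begins with the same $e_{j_i}$, multiplying over $E$ versus multiplying over $k$ gives the same result, and the sums over tuples of idempotents on the two sides match up index for index. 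The main thing to check is that $s_n$ is well defined — independent of the choice of insertion of idempotents — which was already noted when the map was defined; granting that, the verification is again purely formal. I expect no serious obstacle: the only genuine input is the multiplicativity of $\pi$ modulo $r^2$ for the $p$-side and the well-definedness of the idempotent refinement for the $s$-side, both of which are essentially built into the constructions.
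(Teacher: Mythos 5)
Your handling of the squares involving $\mu$ and of the $s$-half is essentially fine (there the only point is the well-definedness of $s_n$, which you rightly defer, and the term-by-term matching of signs on refined generators). The genuine gap is in the $p$-half: the identity $\pi(x_i x_{i+1})=\pi(x_i)\pi(x_{i+1})$, which you make the crux of the verification, is false, and the justification you give does not apply. In this section the algebra is only assumed to satisfy $A=E\oplus r$ with $E\cong k\times\cdots\times k$; the hypothesis $r^2=0$ is not in force here (it only enters from Section 3 on), and the identity fails even when $r^2=0$: writing $x=\epsilon+\pi(x)$, $y=\epsilon'+\pi(y)$ with $\epsilon,\epsilon'\in E$, one has $\pi(xy)=\epsilon\pi(y)+\pi(x)\epsilon'+\pi(x)\pi(y)$, so $xy-\pi(x)\pi(y)=\epsilon\epsilon'+\epsilon\pi(y)+\pi(x)\epsilon'$ is not in $r^2$; concretely, in $A=k[t]/(t^2)$ one gets $\pi\bigl((1+t)^2\bigr)=2t\neq 0=\pi(1+t)\pi(1+t)$. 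For the same reason the end terms do not match one by one either: $p_n$ applied to $ax_1\otimes_k x_2\otimes_k\cdots$ gives $ax_1\otimes_E\pi(x_2)\otimes_E\cdots$, while $\delta\circ p_{n+1}$ produces $a\pi(x_1)\otimes_E\pi(x_2)\otimes_E\cdots$, and the difference $a\epsilon_1\otimes_E\pi(x_2)\otimes_E\cdots$ is nonzero in general. So a purely termwise comparison, which is what your argument does, cannot close.

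What actually makes $p_n\delta=\delta p_{n+1}$ true is a cancellation across the alternating sum, using that the target tensor products are taken over $E$. Decomposing $x_i=\epsilon_i+\pi(x_i)$ with $\epsilon_i\in E$, the discrepancy of the $i$-th interior term is $(-1)^i\,a\otimes_E\cdots\otimes_E\bigl(\epsilon_i\pi(x_{i+1})+\pi(x_i)\epsilon_{i+1}\bigr)\otimes_E\cdots\otimes_E b$, and the two outer terms contribute $a\epsilon_1\otimes_E\pi(x_2)\otimes_E\cdots$ and $(-1)^{n+1}\cdots\otimes_E\pi(x_n)\otimes_E\epsilon_{n+1}b$. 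Since $u\epsilon\otimes_E v=u\otimes_E\epsilon v$ for $\epsilon\in E$, each discrepancy equals $\pm T_i$, where $T_i$ denotes the tensor obtained by deleting the slot of $\pi(x_i)$ and absorbing $\epsilon_i$ into a neighbouring factor; each $T_i$ occurs exactly twice with opposite signs, so the total difference telescopes to zero. (Equivalently, $p$ is the composite of the evident chain map from the bar resolution over $k$ to the bar resolution relative to $E$ with the projection onto its normalization, using $A/E\cong r$ as $E$-bimodules; the telescoping is the standard normalization argument.) With the $p$-half repaired in this way, the rest of your proposal goes through and is consistent with the ``straightforward verification'' the paper itself invokes without writing out.
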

\begin{proof} A straightforward verification shows that 
the diagram (\ref{Diagrama}) is commutative.
\end{proof}

\subsection*{Two complexes.}
We will denote the {\it{Hochschild cochain complex}} 
by $\bold{C^\bullet(A,A)}$. 
Recall that it is defined by the complex,  
\[
\begin{array}{rll}
0  \rightarrow  
A \stackrel{\delta}{\rightarrow} 
Hom_{k}(A,A) \stackrel{\delta}{\longrightarrow} & \cdots &\, \\
\cdots \: \longrightarrow 
& Hom_{k}(A^{\otimes^n_k},A) \stackrel{\delta}{\longrightarrow} 
Hom_{k}(A^{\otimes^{n+1}_k},A) & \cdots
\end{array}
\]
where 
$
\delta(a)(x)=xa-ax
$
for $a$ in $A$ and 
\[
\begin{array}{cl}
\delta f(x_1 \otimes \cdots \otimes x_n \otimes x_{n+1}) = & 
x_1f(x_2 \otimes \cdots \otimes x_{n+1}) \, + \\
\, & \sum_{i=1}^{n} (-1)^i f(x_1 \otimes \cdots \otimes x_ix_{i+1} \otimes
\cdots \otimes x_{n+1})+ \\
\, & (-1)^{n+1} f(x_1 \otimes \cdots \otimes x_n)x_{n+1}
\end{array}
\]
for $f$ in $Hom_{k}(A^{\otimes^{n}_k},A)$.
Notice that after applying the functor 
$Hom_{A^e}(-,A)$ 
to the standard bar resolution, 
the Hochschild cochain complex is obtained 
by identifying  
$Hom_{A^e}(A \otimes_k A^{\otimes^n_k} \otimes_k A,A)$ 
to 
$Hom_{k}(A^{\otimes^n_k},A)$. 
The {\it{reduced complex}} is obtained from 
the reduced bar resolution in a similar way.  
First we apply  $Hom_{A^e}(-,A)$ to the reduced bar resolution, 
then we identify the vector space 
$Hom_{A^e}(A \otimes_E r^{\otimes^n_E} \otimes_E A,A)$
to $Hom_{E^e}(r^{\otimes^n_E},A)$. Therefore, 
the reduced bar complex that we
denote by $\bold{R^\bullet(A,A)}$ is given by
\[
\begin{array}{rll}
0  \rightarrow  
A^E \stackrel{\delta}{\rightarrow} 
Hom_{E^e}(r,A) \stackrel{\delta}{\longrightarrow} & \cdots &\, \\
\cdots \: \longrightarrow 
& Hom_{E^e}(r^{\otimes^n_E},A) \stackrel{\delta}{\longrightarrow} 
Hom_{E^e}(r^{\otimes^{n+1}_E},A) & \cdots
\end{array}
\]
where $A^E$ is the subalgebra of $A$ defined as follows:
\[
A^E:= \{ a \in A \, | \,  ae=ea \text{ for all } e \in E\}.
\]
The differentials in the reduced complex are given as the above formulas.

\subsection*{Induced quasi-isomorphism.}
In this paragraph, we will compute the quasi-ismorphisms 
between the Hochschild cochain complex and the
reduced complex, 
induced by the comparison maps $p$ and $s$. We will denote them by  
$$
p^\bullet: \bold{R^\bullet(A,A)} \rightarrow \bold{C^\bullet(A,A)} 
\text{ \, and \, } 
s^\bullet: \bold{C^\bullet(A,A)} \rightarrow \bold{R^\bullet(A,A)}.$$
\begin{map}[$p^\bullet$]
In degree zero, we have that 
$
p_0:A^E \rightarrow A
$ 
is the inclusion map. For $n \geq 1$,  
\[
p^n: Hom_{E^e}(r^{\otimes^n_E},A) \longrightarrow Hom_k(A^{\otimes^n_k},A)
\]
is given by
\[
p^nf (x_1 \tensork \cdots \tensork x_n)=
f(\pi(x_1) \tensorE \cdots \tensorE \pi(x_n))
\]
where 
$f$ is in $Hom_{E^e}(r^{\otimes^n_E},A)$ and $x_i \in r$.
\end{map}
\begin{map}[$s^\bullet$]
In degree zero, we have that 
$
s^0: A  \rightarrow A^E
$
is given by
\[
s^0(x)=\sum_{e \in E_0} exe 
\]
where $x \in A$.
For $n \geq 1$, we have that
\[
s^n: Hom_k(A^{\otimes^n_k},A) \longrightarrow Hom_{E^e}(r^{\otimes^n_E},A)
\]
is given by
\[
s^nf(x_1 \tensorE \cdots \tensorE x_n)= 
\sum_{ j_0, \dots, j_{n}} 
e_{j_0} f(e_{j_0} x_1e_{j_1} \tensork \cdots
\tensork e_{j_{i-1}}x_{i}e_{j_i} \tensork \cdots \tensork
e_{j_{n-1}}x_n e_{j_n}) e_{j_n}
\]
where the sum is over all $(n+1)$-tuples 
$(e_{j_0},\dots,e_{j_i},\dots,e_{j_{n}})$ of elements of $E_0$, 
$f$ is in $Hom_{k}(A^{\otimes^n_k},A)$ and $x_i$ is in $r$.
\end{map}
\begin{remark}
Let us remark that $s^{\bullet} \, p^{\bullet}= id_{\bold{R^\bullet(A,A)}}$.
\end{remark}

\section{Gerstenhaber bracket and reduced bracket.} 
The Gerstenhaber bracket is defined 
on the Hochschild cohomology groups 
using the Hochschild complex. 
In this section we will define the 
reduced bracket using the 
reduced complex. 
We show that the Gerstenhaber bracket and the reduced bracket
provides the same graded Lie algebra structure on $HH^{*+1}(A)$. 
We begin by recalling 
the Gerstenhaber bracket in order to fix notation.
\subsection*{Gerstenhaber bracket.} 
Set $C^0(A,A):=A$ and for $n \geq 1$, 
we will denote the space of Hochschild cochains by 
\[
C^n(A,A):= Hom_{k}(A^{\otimes^n_k},A). 
\]
In \cite{gersten}, Gerstenhaber defined a right 
pre-Lie system $\{ C^n(A,A), \circ_i\}$ 
where elements of $C^n(A,A)$ are declared to have degree $n-1$. 
The operation $\circ_i$ is given as follows. 
Given $n \geq 1$, let us fix $i = 1,\dots, n$. 
The bilinear map
\[
\circ_i: C^n(A,A) \times C^m(A,A) \longrightarrow C^{n+m-1}(A,A)
\]
is given by the following formula:
\begin{center}
\scalebox{0.97}{$
f^n \circ_i g^m (x_1 \otimes \cdots \otimes x_{n+m-1}):=
f^n(x_1 \otimes \cdots \otimes g^m(x_i \otimes \cdots \otimes x_{i+m-1}) 
\otimes \cdots \otimes x_{n+m-1})$}
\end{center}
where $f^n$ is in $C^n(A,A)$ and $g^m$ is in $C^m(A,A)$. 
Then he proved that such pre-Lie system 
induces a graded pre-Lie algebra structure on 
\[
C^{*+1}(A,A):=\bigoplus_{n=1}^\infty C^n(A,A)
\]
by defining an operation $\circ$ as follows:
\[
f^n \circ g^m:= \sum_{i=1}^n (-1)^{(i-1)(m-1)} f^n \circ_i g^m.
\]
Finally, $C^{*+1}(A,A)$ becomes a 
graded Lie algebra by defining the bracket 
as the graded commutator of $\circ$. So we have that
\[
[f^n \, , \, g^m]:=f^n \circ g^m - (-1)^{(n-1)(m-1)} g^m \circ f^n.
\]  
\begin{remark}
The Gerstenhaber 
restricted to $C^1(A,A)$ is the usual Lie commutator bracket.
\end{remark}
Moreover, Gerstenhaber proved that 
\[
\delta[f^n \, , \, g^m]= 
[f^n \, , \, \delta g^m] + (-1)^{m-1}[\delta f^n \, , \, g^m]
\]
where $\delta$ is the differential of Hochschild cochain complex.
This formula implies 
that the  following bilinear map:
\[
[\, - \, , \, - \,]: HH^n(A) \times HH^m(A) \longrightarrow HH^{n+m-1}(A) 
\]
is well defined.
Therefore, 
$HH^{*+1}(A)$ endowed with the induced Gerstenhaber
bracket is also a graded Lie algebra.

\subsection*{Reduced Bracket.}
In order to define the reduced bracket, 
we proceed in the same way as Gerstenhaber did. 
We will define the reduced bracket as the graded commutator of an
operation $\circR$. Such operation will be given by $\circulito$. 
Denote by $C^n_E(r,A)$ the cochain space of the reduced complex, 
this is 
\[
C^n_E(r,A):= Hom_{E^e}(r^{\otimes^n_E},A).
\]
\begin{definition}
Let $n\geq 1$ and fix $i=1, \dots , n$. 
The bilinear map
\[
\circulito: C^n_E(r,A) \times C^m_E(r,A) \rightarrow C^{n+m-1}_E(r,A)
\]
is given by the following formula:
\[
f^n \circulito g^m(x_1 \tensorE \cdots \tensorE x_{n+m-1}):=
f^n(x_1 \tensorE \cdots \tensorE 
\pi g^m(x_i \tensorE \cdots \tensorE x_{i+m-1}) 
\tensorE \cdots \tensorE x_{n+m-1})
\]
where $f^n$ is in $C^n_E(r,A)$ and $g^m$ is in $C^m_E(r,A)$ and 
$x_1,\dots, x_{n+m-1}$ are in $r$. 
Let us remark that the image of $g^m$ does not necessarily belong to
the radical but the image of $\pi g^m$ clearly does. 
Therefore $f^n \circulito g^m$ is well defined. 
\end{definition}
Then we can define $\circR$ on 
\[
C^{*+1}_E(r,A):= \bigoplus_{n=1}^\infty C^n_E(r,A) 
\]
as above but replacing 
$\circulito$ instead of $\circ_i$. This means that 
\[
f^n \circR g^m := \sum_{i=1}^n (-1)^{(i-1)(m-1)}f^n \circulito g^m
\qquad \: \: \:
\]
Let us remark $\circR$ is a graded operation on 
$C^{*+1}_E(r,A)$ by declaring elements of $C^n_E(r,A)$ to have degree
$n-1$.
\begin{definition}
We call the {\it{reduced bracket}}, denoted by $[\, - \, , \, - \,]_R$, 
to the graded commutator bracket of $\circR$. This is, 
\[
[\, - \, , \, - \,]_R: C^n_E(r,A) \times  C^m_E(r,A)  
\longrightarrow  C^{n+m-1}_E(r,A)
\]
is given by
\[
[f^n \, , \, g^m]_R := f^n \circR g^m - (-1)^{(n-1)(m-1)} g^m \circR f^n.
\]
\end{definition}
The following lemmas will relate the Gerstenhaber bracket
and the reduced bracket. 
\begin{lemma}\label{bracketuno}
We have the following formula:
\[
[f^n \, , \, g^m]_R = s^{n+m-1}[\, p^n f^n \, , \, p^m g^m \,].
\]
\end{lemma}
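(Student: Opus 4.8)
The plan is to deduce the identity from a simpler fact: that the comparison map $p^{\bullet}$ is already a morphism for the slot operations, i.e. $p^{n+m-1}(f^n\circulito g^m)=(p^nf^n)\circ_i(p^mg^m)$, hence for $\circR$ versus $\circ$ and for their graded commutators; once this is in hand, applying $s^{\bullet}$ and using the relation $s^{\bullet}p^{\bullet}=id$ gives the statement immediately. This route avoids computing $s^{n+m-1}[\,p^nf^n\,,\,p^mg^m\,]$ head-on, which would otherwise mean expanding the defining sum of $s^{\bullet}$ over the idempotents of $E_0$ and then collapsing it. What makes everything formal is the observation that the projection $\pi$ appearing in the definition of $\circulito$ is exactly the projection that $p^nf^n$ applies to its $i$-th variable.

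The key step is the cochain-level identity, for $n,m\geq 1$ and $i\in\{1,\dots,n\}$,
\[
p^{n+m-1}\bigl(f^n\circulito g^m\bigr)=\bigl(p^n f^n\bigr)\circ_i\bigl(p^m g^m\bigr)
\]
in $Hom_k(A^{\otimes^{n+m-1}_k},A)$. I would verify it by evaluating both sides on $y_1\tensork\cdots\tensork y_{n+m-1}$ with $y_j\in A$. The left side equals $(f^n\circulito g^m)(\pi(y_1)\tensorE\cdots\tensorE\pi(y_{n+m-1}))$, and by the definition of $\circulito$ this is
\[
f^n\bigl(\pi(y_1)\tensorE\cdots\tensorE\pi(y_{i-1})\tensorE\pi g^m(\pi(y_i)\tensorE\cdots\tensorE\pi(y_{i+m-1}))\tensorE\pi(y_{i+m})\tensorE\cdots\tensorE\pi(y_{n+m-1})\bigr).
\]
For the right side, $(p^mg^m)(y_i\tensork\cdots\tensork y_{i+m-1})=g^m(\pi(y_i)\tensorE\cdots\tensorE\pi(y_{i+m-1}))$; inserting this into the $i$-th slot and applying $p^nf^n$---which precomposes each of its variables with $\pi$---produces exactly the same expression, because $\pi g^m(\cdots)$ and $\pi\bigl(g^m(\cdots)\bigr)$ are the same element of $r$. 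Thus the identity holds, using only the definitions of $p^{\bullet}$, $\circulito$ and $\circ_i$; in particular no cancellation of idempotent sums is needed.

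Next I would propagate this. Since $\circR$ and $\circ$ are the identical signed sums $\sum_{i=1}^{n}(-1)^{(i-1)(m-1)}f^n\circulito g^m$ and $\sum_{i=1}^{n}(-1)^{(i-1)(m-1)}f^n\circ_i g^m$ of the slot operations and $p^{n+m-1}$ is $k$-linear, the step above yields $p^{n+m-1}(f^n\circR g^m)=(p^nf^n)\circ(p^mg^m)$; applying this also to the pair $(g^m,f^n)$ and forming graded commutators, whose sign $(-1)^{(n-1)(m-1)}$ is the same in both complexes, gives $p^{n+m-1}[f^n,g^m]_R=[\,p^nf^n\,,\,p^mg^m\,]$. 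Applying the $k$-linear map $s^{n+m-1}$ to both sides and invoking $s^{\bullet}p^{\bullet}=id$ then collapses the left-hand side to $[f^n,g^m]_R$, giving $[f^n,g^m]_R=s^{n+m-1}[\,p^nf^n\,,\,p^mg^m\,]$, as claimed.

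I do not expect a genuine obstacle: the whole content lies in the first identity, and its proof is a formal manipulation of the definitions. The care required is purely bookkeeping---tracking which tensor factors are taken over $k$ and which over $E$, using the $E^e$-linearity of $f^n$ and $g^m$ where the balanced tensor products demand it, and reading the formula for $p^n$ as ``precompose with $\pi$ in every variable'', so that $p^nf^n$ is genuinely a $k$-linear map on $A^{\otimes^n_k}$ to which the Gerstenhaber operation $\circ_i$ applies.
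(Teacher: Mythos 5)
Your proof is correct, but it takes a different route from the paper. The paper's own proof of this lemma verifies directly, at the level of the slot operations, the identity $f^n \circulito g^m = s^{n+m-1}\bigl(\,p^n f^n \circ_i p^m g^m\,\bigr)$ and then sums over $i$ with signs; this is a computation in which the map $s^{n+m-1}$, with its sum over idempotents of $E_0$, appears explicitly (the paper leaves it as a "straightforward verification"). You instead prove the companion identity $p^{n+m-1}\bigl(f^n \circulito g^m\bigr) = \bigl(p^n f^n\bigr)\circ_i\bigl(p^m g^m\bigr)$ --- which is precisely the computation the paper uses to prove Lemma \ref{bracketdos} --- propagate it through the signed sums and the graded commutator to get $p^{n+m-1}[\,f^n,g^m\,]_R=[\,p^nf^n,p^mg^m\,]$, and then apply $s^{n+m-1}$ together with the retraction identity $s^{\bullet}p^{\bullet}=id_{\bold{R^\bullet(A,A)}}$ stated in the paper. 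Your route is legitimate (no circularity: the retraction identity is established independently, right after the definition of $s^{\bullet}$) and arguably more economical, since a single slot-level computation involving only $p$ and $\pi$ yields both Lemma \ref{bracketuno} and Lemma \ref{bracketdos}, and it avoids any manipulation of the idempotent sums defining $s^{\bullet}$. What the paper's direct verification buys in exchange is independence from the retraction identity and from the other lemma: each of the two bracket-comparison formulas is checked on its own from the definitions.
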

\begin{proof}
A straightforward verification shows that 
\[
f^n \circulito g^m = s^{n+m-1}(\,p^n f^n \circ_i p^m g^m\,).
\]
Since $s^{n+m-1}$ is a linear application 
we have the formula wanted. 
\end{proof}
\begin{lemma} \label{bracketdos}
We have the following formula:
\[
p^{n+m-1}[\, f^n \, , \, g^m \,]_R= [\, p^n f^n \, , \, p^m g^m\, ]
\]
\end{lemma}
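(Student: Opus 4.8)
The plan is to deduce this from Lemma \ref{bracketuno} together with the two facts recorded as remarks: $p_n s_n = \mathrm{id}$ on the reduced resolution (so that, dually, $s^{\bullet} p^{\bullet} = \mathrm{id}_{\bold{R^\bullet(A,A)}}$) and the fact that $p^{\bullet}$ and $s^{\bullet}$ are induced by maps of complexes, hence are themselves cochain maps commuting with $\delta$. Concretely, I would apply $p^{n+m-1}$ to both sides of the identity in Lemma \ref{bracketuno}. On the left this gives $p^{n+m-1}[f^n,g^m]_R$. On the right it gives $p^{n+m-1} s^{n+m-1}[p^n f^n, p^m g^m]$, and since $s^{\bullet} p^{\bullet} = \mathrm{id}$ in each degree, the composite $p^{n+m-1} s^{n+m-1}$ is the identity on the relevant reduced cochain space — so the right-hand side collapses to $[p^n f^n, p^m g^m]$. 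This yields the claimed formula directly.

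I should be a little careful about one subtlety: the remark states $s^{\bullet} p^{\bullet} = \mathrm{id}_{\bold{R^\bullet(A,A)}}$, i.e. $s^{n+m-1} p^{n+m-1} = \mathrm{id}$, whereas the step above wants $p^{n+m-1} s^{n+m-1} = \mathrm{id}$ applied to an element of the form $[p^n f^n, p^m g^m] \in C^{n+m-1}(A,A)$. The two composites need not agree on all of $C^\bullet(A,A)$, so I would instead argue that the element $[p^n f^n, p^m g^m]$ lies in the image of $p^{n+m-1}$: by Lemma \ref{bracketuno}, $s^{n+m-1}[p^n f^n, p^m g^m] = [f^n,g^m]_R$, hence $p^{n+m-1} s^{n+m-1} [p^n f^n, p^m g^m] = p^{n+m-1}[f^n,g^m]_R$, and it suffices to check that $p^{n+m-1} s^{n+m-1}$ fixes $[p^n f^n, p^m g^m]$. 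The cleanest route is to observe from the explicit formulas for $p^n$ and $s^n$ that $s^n p^n f = f$ for any $f \in C^n_E(r,A)$ and that, for any $f \in C^n(A,A)$, $p^n s^n f$ is the "restriction–extension" cochain $x_1 \tensork \cdots \tensork x_n \mapsto \sum_{j_0,\dots,j_n} e_{j_0} f(e_{j_0}\pi(x_1)e_{j_1} \tensork \cdots \tensork e_{j_{n-1}}\pi(x_n)e_{j_n}) e_{j_n}$; one then checks that cochains of the form $p^n f^n \circ_i p^m g^m$ — and hence their graded commutators — already have this normalized shape, so $p^{n+m-1}s^{n+m-1}$ acts as the identity on them.

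The only real work, therefore, is this last verification: that the Gerstenhaber bracket of two cochains coming from the reduced complex (via $p^{\bullet}$) is itself a "reduced-type" cochain, i.e. factors through $\pi$ in each slot and is suitably sandwiched by idempotents. I expect this to be the main obstacle, though it is a routine-if-tedious bookkeeping argument with the explicit formulas for $\circ_i$, $p^n$ and $s^n$: the key points are that $p^m g^m$ already outputs something whose relevant behavior only depends on $\pi$ of its arguments, and that plugging such a cochain into the $i$-th slot of $p^n f^n$ produces an expression which $\pi$ (applied by the outer $f^n$'s source) does not alter, so that applying $s^{n+m-1}$ and then $p^{n+m-1}$ reproduces the same cochain. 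Once this is in hand, the lemma follows in two lines from Lemma \ref{bracketuno}.
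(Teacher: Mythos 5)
Your final argument is correct, but it takes a genuinely different route from the paper. The paper proves the lemma directly at the level of the composition operations: it checks by unwinding definitions that $p^{n+m-1}(f^n \circulito g^m) = p^n f^n \circ_i p^m g^m$, which is essentially immediate because both sides evaluate to $f^n\bigl(\pi(x_1)\otimes\cdots\otimes \pi\,g^m(\pi(x_i)\otimes\cdots\otimes\pi(x_{i+m-1}))\otimes\cdots\otimes\pi(x_{n+m-1})\bigr)$, and then concludes by linearity of $p^{n+m-1}$; no idempotent sums ever enter. You instead deduce the lemma from Lemma \ref{bracketuno} by applying $p^{n+m-1}$, which forces you to handle the composite $p^{n+m-1}s^{n+m-1}$ — and you correctly catch that the stated remark only gives $s^\bullet p^\bullet=\mathrm{id}$ on the reduced complex, not $p^\bullet s^\bullet=\mathrm{id}$ on $\bold{C^\bullet(A,A)}$, so you must show that $p^{n+m-1}s^{n+m-1}$ fixes $p^nf^n\circ_i p^mg^m$. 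That normalization claim is true and provable exactly along the lines you sketch: such a composite depends only on $\pi$ of each argument, and the inserted idempotent sums collapse using the $E^e$-linearity of $f^n$ and $g^m$, the $E$-balancedness of $\otimes_E$, the identity $\pi(exe')=e\pi(x)e'$ for $e,e'\in E_0$, and $\sum_{e\in E_0}e=1$. So your proof is valid once that bookkeeping is written out, but note that this verification is strictly more work than the paper's one-line check of $p^{n+m-1}(f^n \circulito g^m) = p^n f^n \circ_i p^m g^m$, which you could have done directly without invoking Lemma \ref{bracketuno} or the maps $s^n$ at all; what your route buys is a uniform "apply $p$ and normalize" viewpoint, at the cost of making the crux a heavier idempotent computation that you currently only sketch.
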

\begin{proof}
Since $p^{n+m-1}$ is a complex morphism, we prove that 
\[
p^{n+m-1} (f^n \circulito g^m) = p^nf^n \circ_i p^m g^m 
\]
by a direct computation.
\end{proof}
We will write 
$p^*$ for the morphism 
\[
p^* : C^{*+1}_E(r,A) \longrightarrow C^{*+1}(A,A) 
\]
induced by $p^\bullet$. 
We have the following proposition due to the above lemmas that relate
both brackets.
\begin{proposition}\label{gradedLiealgebra}
The graded product $[\, - \, , \, - \,]_R$ endows $C^*_E(r,A)$ with the
structure of graded Lie algebra. 
We also have that $p^*$ 
is a morphism of graded Lie algebras. 
\end{proposition}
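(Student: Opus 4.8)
The plan is to transport the graded Lie algebra structure of $C^{*+1}(A,A)$, equipped with the Gerstenhaber bracket, back to $C^{*+1}_E(r,A)$ along the degree-preserving maps $p^n$. Three facts are available. First, $\bracketR$ is by definition the graded commutator of $\circR$, hence automatically graded antisymmetric: a one-line manipulation gives $-(-1)^{(n-1)(m-1)}[g^m,f^n]_R = f^n \circR g^m - (-1)^{(n-1)(m-1)} g^m \circR f^n = [f^n,g^m]_R$. Second, Lemma~\ref{bracketdos} asserts that $p^{n+m-1}$ intertwines the two brackets, $p^{n+m-1}[f^n,g^m]_R = [\,p^nf^n\,,\,p^mg^m\,]$. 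Third, the Remark $s^\bullet p^\bullet = \mathrm{id}$ gives $s^np^n = \mathrm{id}$ in each degree, so every $p^n$ is injective.

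Granting these, the only thing left to check is the graded Jacobi identity for $\bracketR$. Fix $f^n \in C^n_E(r,A)$, $g^m \in C^m_E(r,A)$, $h^l \in C^l_E(r,A)$, and let $J_R(f^n,g^m,h^l)$ denote the signed cyclic sum
\[
(-1)^{(n-1)(l-1)}[[f^n,g^m]_R,h^l]_R + (-1)^{(m-1)(n-1)}[[g^m,h^l]_R,f^n]_R + (-1)^{(l-1)(m-1)}[[h^l,f^n]_R,g^m]_R .
\]
Apply $p^{n+m+l-2}$ and use Lemma~\ref{bracketdos} twice inside each summand: first $p([[f^n,g^m]_R,h^l]_R) = [\,p([f^n,g^m]_R)\,,\,p h^l\,]$, then $p([f^n,g^m]_R) = [\,pf^n\,,\,pg^m\,]$, and similarly for the other two terms. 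The result is that $p^{n+m+l-2}\bigl(J_R(f^n,g^m,h^l)\bigr)$ is exactly the Gerstenhaber signed cyclic sum evaluated on $p^nf^n$, $p^mg^m$, $p^lh^l$; the signs carry over verbatim because $p$ preserves the degree $n-1$ of a cochain in $C^n$. Since $C^{*+1}(A,A)$ with the Gerstenhaber bracket is a graded Lie algebra, this sum vanishes, and injectivity of $p^{n+m+l-2}$ forces $J_R(f^n,g^m,h^l)=0$. Hence $\bracketR$ makes $C^{*+1}_E(r,A)$ a graded Lie algebra.

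For the second assertion, $p^* = \bigoplus_n p^n$ is linear and degree-preserving, and Lemma~\ref{bracketdos} is precisely the statement that $p^*[f^n,g^m]_R = [\,p^*f^n\,,\,p^*g^m\,]$; together with what was just proved, this says $p^*$ is a morphism of graded Lie algebras (in fact an injective one, with image a graded Lie subalgebra of $C^{*+1}(A,A)$ by Lemma~\ref{bracketdos}). There is no genuine obstacle in this proof: the entire content sits in Lemmas~\ref{bracketuno} and~\ref{bracketdos}, and the only points demanding a little attention are matching the signs of the two graded Jacobi identities under the degree-preserving map $p^*$ and invoking $s^np^n=\mathrm{id}$ for injectivity. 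Equivalently, one may argue that the image of $p^*$ is closed under the Gerstenhaber bracket by Lemma~\ref{bracketdos}, hence a graded Lie subalgebra, so that $\bracketR$, being the pullback of that bracket along the linear isomorphism onto the image, is automatically a graded Lie bracket.
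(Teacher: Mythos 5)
Your proof is correct and takes essentially the same route as the paper: both deduce the graded Jacobi identity for $[\,-\,,\,-\,]_R$ from the Gerstenhaber one via the comparison maps, the paper by writing $[[f^n,g^m]_R,h^l]_R=s^{n+m+l-2}[[p^nf^n,p^mg^m],p^lh^l]$ (Lemmas \ref{bracketuno} and \ref{bracketdos}) and using linearity of $s$, you by applying $p$ twice through Lemma \ref{bracketdos} and then invoking injectivity of $p$ from $s^\bullet p^\bullet=\mathrm{id}$, which is the same identity exploited in the opposite order. Your direct verification of graded antisymmetry from the commutator form is a harmless shortcut compared with the paper's appeal to Lemma \ref{bracketuno}.
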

\begin{proof}
Using the lemma \ref{bracketuno}, it is easy
to see that the reduced bracket satisfies the 
graded antisymmetric property 
as a consequence of 
the fact that the Gerstenhaber bracket satisfies the
same condition.
For the graded Jacobi identity, we proceed in the same way. 
First, let us write a formula that relates both brackets, 
using the lemma \ref{bracketuno} and the lemma \ref{bracketdos}  
we have that
\[
\begin{array}{rcl}
[\,[\, f^n \, , \, g^m \,]_R \, , \, h^l \,]_R & = & 
s^{n+m+p-2}[\, p^{n+m-1}[\, f^n \, , \, g^m \,]_R \, , \, p^lh^l \,] \\
\, & = & 
s^{n+m+p-2}[\, [ \, p^nf^n \, , \, p^mg^m \, ] \, , \, p^lh^l \, ]
\end{array}
\]
Then, using the linearity of $s^{n+m+p-2}$ and the fact that the
Gerstenhaber bracket satisfies the graded Jacobi identity 
we have proved that $[\, - \, , \, - \, ]_R$ satisfies the two conditions
of the definition of graded Lie algebra. 
Finally,  
$p^*$ becomes a Lie graded morphism because of lemma \ref{bracketdos}.  
\end{proof}
Now, the reduced bracket induce a bracket in Hochschild cohomology
groups because of the following lemma.
\begin{lemma}\label{diferencial}
Let $\delta$ be the differential of the Hochschild cocomplex 
then we have 
\[
\delta [\, f^n \, , \, g^m \, ]_R= 
[\, f^n \, , \, \delta g^m \,]_R+(-1)^{m-1}[\,\delta f^n \, , \, g^m \,]_R .
\]
Hence we have a well defined bracket in the Hochschild cohomology groups:
\[
[\, - \, , \, - \,]_R: HH^n(A) \times HH^m(A) \longrightarrow
HH^{n+m-1}(A) \, .
\]
\end{lemma}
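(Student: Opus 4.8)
The plan is to deduce the identity from Gerstenhaber's analogous formula for the Hochschild complex, transported through the quasi-isomorphisms $p^\bullet$ and $s^\bullet$. Two preliminary ingredients are needed. First, since $p$ and $s$ are maps of complexes (Lemma~\ref{map}), the induced maps commute with the Hochschild differential: $\delta\, p^n = p^{n+1}\, \delta$ and $\delta\, s^n = s^{n+1}\, \delta$ in the appropriate degrees. Second, for $f^n \in C^n_E(r,A)$ and $g^m \in C^m_E(r,A)$ the cochains $p^n f^n$ and $p^m g^m$ are genuine Hochschild cochains, so Gerstenhaber's formula $\delta[\,h\,,\,h'\,] = [\,h\,,\,\delta h'\,] + (-1)^{m-1}[\,\delta h\,,\,h'\,]$ recalled above applies to the pair $(p^n f^n,\, p^m g^m)$.

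With these in hand the computation is a chain of substitutions. By Lemma~\ref{bracketuno}, $[f^n,g^m]_R = s^{n+m-1}[\,p^n f^n\,,\,p^m g^m\,]$; applying $\delta$ and using that $s^\bullet$ is a chain map gives $\delta[f^n,g^m]_R = s^{n+m}\,\delta[\,p^n f^n\,,\,p^m g^m\,]$. I then apply Gerstenhaber's identity inside, rewrite $\delta\, p^m g^m = p^{m+1}\, \delta g^m$ and $\delta\, p^n f^n = p^{n+1}\, \delta f^n$, and obtain
\[
\delta[f^n,g^m]_R = s^{n+m}[\,p^n f^n\,,\,p^{m+1}\delta g^m\,] + (-1)^{m-1}\, s^{n+m}[\,p^{n+1}\delta f^n\,,\,p^m g^m\,].
\]
Finally I read each summand backwards through Lemma~\ref{bracketuno}: since $n+(m+1)-1 = (n+1)+m-1 = n+m$, the first term equals $[f^n,\delta g^m]_R$ and the second equals $[\delta f^n,g^m]_R$, which is exactly the claimed formula. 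No sign manipulation is required, because the factor $(-1)^{m-1}$ produced by Gerstenhaber's identity is already the one appearing in the statement.

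For the second assertion I would argue in the usual way. If $f^n$ and $g^m$ are cocycles, the formula just proved gives $\delta[f^n,g^m]_R = 0$, so $[f^n,g^m]_R$ is a cocycle; and if moreover $f^n = \delta f'$ for some reduced cochain $f'$ of degree $n-1$ while $\delta g^m = 0$, then $[\delta f',g^m]_R = (-1)^{m-1}\delta[f',g^m]_R$ is a coboundary, and symmetrically in the second variable. Hence the class of $[f^n,g^m]_R$ depends only on the classes of $f^n$ and $g^m$, so $[\,-\,,\,-\,]_R$ descends to a well-defined bilinear map $HH^n(A)\times HH^m(A)\to HH^{n+m-1}(A)$. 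The argument presents essentially no obstacle: the only delicate point is the elementary degree bookkeeping in the two backward applications of Lemma~\ref{bracketuno} and in the chain-map identities for $p^\bullet$ and $s^\bullet$, everything substantive having already been done in Lemmas~\ref{map}, \ref{bracketuno} and \ref{bracketdos}.
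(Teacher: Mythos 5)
Your proof is correct and follows essentially the same route as the paper: apply Lemma~\ref{bracketuno}, use that $p^\bullet$ and $s^\bullet$ are chain maps, invoke Gerstenhaber's identity on the Hochschild side, and read the result back through Lemma~\ref{bracketuno}. Your version is in fact slightly more careful with the degree shifts (writing $s^{n+m}$ and $p^{m+1}\delta g^m$ where the paper keeps the unshifted superscripts) and makes explicit the standard descent-to-cohomology argument that the paper leaves implicit.
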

\begin{proof}
We have that
\[
\begin{array}{rl}
\delta[\, f^n \, , \, g^m \, ]_R  & = 
\delta s^{n+m-1}[\, p^n f^n \, , \, p^m g^m \,]  \\ 
\,              & = s^{n+m-1} \delta [\, p^n f^n \, , \, p^m g^m \, ] \\
\,              & = s^{n+m-1} [\, p^n f^n \, , \, \delta p^m g^m \,] 
          + (-1)^{m-1}s^{n+m-1} [\, \delta p^n f^n \, , \,  p^m g^m \, ] \\
\,              & = s^{n+m-1} [\, p^n f^n \, , \, p^m \delta g^m \, ] 
         + (-1)^{m-1}s^{n+m-1} [\, p^n \delta f^n \, , \,  p^m g^m \, ] \\
\,                 &= [\, f^n \, , \,  \delta g^m \, ]_R + 
               (-1)^{m-1}[\, \delta f^n \, , \, g^m \, ]_R 

\end{array}
\]\end{proof}
We have equipped $HH^{*+1}(A)$ with a graded Lie algebra
structure induced by the reduced bracket. 
We know that $HH^{*+1}(A)$ is already a graded Lie algebra and this
structure is given by the Gerstenhaber bracket. 
We have then the following proposition.
\begin{proposition}
The graded Lie algebra $HH^{*+1}(A)$ endowed with the Gerstenhaber
bracket is isomorphic to $HH^{*+1}(A)$ endowed with the reduced bracket.
\end{proposition}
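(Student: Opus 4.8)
The plan is to promote the comparison‑map data of Section~1 from the cochain level to cohomology and observe that it automatically carries the Lie structure along. The isomorphism we shall exhibit is the map induced by $p^*$. Recall that $p^*=p^\bullet:\bold{R^\bullet(A,A)}\to\bold{C^\bullet(A,A)}$ is obtained by applying $Hom_{A^e}(-,A)$ to the comparison map $p:\bold{S}\to\bold{R}$ of Lemma~\ref{map}. Since $\bold{S}$ and $\bold{R}$ are both $A^e$-projective resolutions of $A$ and $p$ lifts $id_A$, the comparison theorem of homological algebra tells us that $p$ is a homotopy equivalence; equivalently, the cohomology of $\bold{R^\bullet(A,A)}$ and that of $\bold{C^\bullet(A,A)}$ are both canonically $Ext^*_{A^e}(A,A)=HH^*(A)$, and $H^n(p^\bullet)$ realizes this identification. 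If one prefers an elementary argument: the Remark $s^\bullet p^\bullet=id$ shows $H^n(p^\bullet)$ is split injective, while $p\circ s=id_{\bold R}$ (from $p_ns_n=id$) together with $s\circ p\simeq id_{\bold S}$ — the latter because $s\circ p$ and $id_{\bold S}$ are two lifts of $id_A$, hence homotopic — yields $p^\bullet s^\bullet\simeq id$ and therefore surjectivity. Either way, $H^n(p^\bullet):H^n(\bold{R^\bullet(A,A)})\to HH^n(A)$ is an isomorphism of $k$-vector spaces for every $n$.

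Next I would upgrade this vector‑space isomorphism to a graded Lie algebra isomorphism. By Proposition~\ref{gradedLiealgebra}, the cochain map $p^*:C^{*+1}_E(r,A)\to C^{*+1}(A,A)$ is a morphism of graded Lie algebras, where the source carries the reduced bracket $\bracketR$ and the target the Gerstenhaber bracket. By Lemma~\ref{diferencial} the reduced bracket is compatible with the Hochschild differential, and by Gerstenhaber's formula recalled above so is the Gerstenhaber bracket; hence both descend to the respective cohomologies. Because $p^*$ is simultaneously a cochain map and a graded Lie algebra morphism, the map it induces on cohomology is a morphism of graded Lie algebras from $H^*(\bold{R^\bullet(A,A)})$ (with the induced reduced bracket) to $HH^*(A)$ (with the Gerstenhaber bracket). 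By the previous paragraph this induced map is bijective, hence an isomorphism of graded Lie algebras. Identifying $H^*(\bold{R^\bullet(A,A)})$ with $HH^*(A)$ along $p^*$ — exactly as is already done implicitly in the statement of Lemma~\ref{diferencial} — this says precisely that $HH^{*+1}(A)$ with the Gerstenhaber bracket is isomorphic to $HH^{*+1}(A)$ with the reduced bracket. (As a byproduct, since $s^np^n=id$ and both factors are isomorphisms on cohomology, $H^*(s^\bullet)$ is the inverse of $H^*(p^\bullet)$ and hence is also a Lie isomorphism.)

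The only genuinely non‑formal step is the claim that $p^\bullet$ is a quasi‑isomorphism, i.e. that the comparison map between the standard and the reduced bar resolution is a homotopy equivalence; this is the main obstacle, but it is exactly the content of the comparison theorem for projective resolutions, and in the present case it can also be checked directly from the explicit formulas for $p_n$ and $s_n$ together with $p_ns_n=id$ and $s^\bullet p^\bullet=id$, by exhibiting a contracting homotopy for $s\circ p-id_{\bold S}$. Everything else — that the induced map on cohomology is well defined, $k$‑linear, degree‑preserving and bracket‑preserving — follows immediately from Lemmas~\ref{bracketuno}, \ref{bracketdos}, \ref{diferencial} and Proposition~\ref{gradedLiealgebra}, with no further computation required.
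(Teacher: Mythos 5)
Your proof is correct and follows essentially the same route as the paper: both exhibit the isomorphism as the map induced on cohomology by $p^*$, which preserves brackets by Proposition \ref{gradedLiealgebra} and is bijective because $p^\bullet$ is a quasi-isomorphism. The only difference is that you spell out why $p^\bullet$ is a quasi-isomorphism (comparison theorem, or $s^\bullet p^\bullet = id$ together with $p^\bullet s^\bullet \simeq id$), a fact the paper simply takes as established by the constructions of Section 1.
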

\begin{proof}
By abuse of notation we continue to write $\overline{p^*}$ for the
automorphism of $HH^{*+1}(A)$ given by the family of morphisms 
$(\overline{p^n})$. Thus, a direct consequence of the above
proposition is that $\overline{p^*}$ becomes an isomorphism of
graded Lie algebras.
\end{proof}

\section{Reduced bracket for monomial algebras 
\\ with radical square zero.}
Let $Q$ be a quiver. 
The path algebra $kQ$ is the 
$k$-linear span of the set of paths of $Q$ 
where multiplication is provided by concatenation or zero.
We denote by $Q_0$ the set of vertices 
and $Q_1$ the set of arrows. 
The trivial paths are denoted 
by $e_i$ where $i$ is a vertex.
The set of all paths of length $n$ is denoted by $Q_n$. 

In the sequel, let $A$ be a monomial algebra with 
radical square zero, this is 
\[
A:=\frac{kQ}{ < Q_2 >}.
\] 
The Jacobson radical of $A$ is given by $r=kQ_1$. 
Moreover, the Wedderburn-Malcev decomposition 
of these algebras 
is $A= kQ_0 \oplus kQ_1$ where $E=kQ_0$. 
In this section we are going to describe  
the reduced bracket on $HH^{*+1}(A)$. 
Such bracket is given 
in terms of the combinatorics of the quiver. 
We will use computations of the Hochschild cohomology 
groups of these algebras given by Cibils in
\cite{cibils}.

\subsection*{The reduced complex.}
Notice that in 
the case of monomial algebras with 
radical square zero, 
the middle-sum terms of the coboundary
morphism of the reduced projective resolution 
$\bold{R}$ vanishes because the multiplication of two arrows
is always zero.
Therefore, we have that the coboundary morphism 
is given by the following formula: 
\[
\begin{array}{cl}
\delta (a \otimes x_1 \otimes \dots \otimes x_{n+1} \otimes b) & =
ax_1 \otimes x_2 \otimes \dots \otimes x_{n+1} \otimes b  \\
\, & + \, (-1) ^{n+1} \,
a \otimes x_1 \otimes \dots \otimes x_{n} \otimes x_{n+1}b.
\end{array}
\] 

In \cite{cibils} an isomorphic complex 
to $\bold{R^\bullet(A,A)}$ is given. 
This new complex is obtained in terms of the combinatorics of the
quiver. To describe it we will need to introduce some notation. 
We say that two paths $\alpha$ and $\beta$ 
are {\it{parallels}} 
if and only if they have the same source and the same end. 
If $\alpha$ and $\beta$ are parallel paths 
we  write $\alpha \parallel \beta$. 
Let $X$ and $Y$ be sets consisting of paths of $Q$, 
the set of parallel paths $X \parallel Y$ is given by :
\[
X \parallel Y:\, =\{ \, (\gamma, \gamma ') \,  \in  \, X \times Y 
\mid \, \gamma \parallel \gamma' \, \}.
\]
For example:
\begin{itemize}  
\item $Q_n \parallel Q_0$ 
is the set of {\it{pointed oriented cycles}}, 
this is the set of pairs $(\gamma^n,e)$ 
where $\gamma^n$ is an oriented cycle of length $n$. 
\item $Q_n \parallel Q_1$ is the set of  pairs $(\gamma^n,a)$ 
where the arrow $a$ is a {\it{shortcut}} of the path $\gamma^n$ of
length $n$.  
\end{itemize}
We denote by $k(X \parallel Y)$ the $k$-vector space generated by
the set $X \parallel Y$. 

For each natural number $n$, Cibils defines 
\[
D_n:k(Q_n \parallel Q_0) \rightarrow k(Q_{n+1} \parallel Q_1) 
\]
as follows:
\begin{equation}\label{Dene}
D_n (\gamma^n,e)=
\sum_{a \in Q_1e} (a\gamma^n, a) + (-1)^{n+1}\sum_{a \in eQ_1}
(\gamma^na, a) 
\end{equation}
where the path $\gamma^n$ is parallel to the vertex $e$.

In \cite{cibils}, the Hochschild cohomology groups of a radical square
zero algebra are obtained from the following complex, 
denoted by $C^\bullet(Q)$ :
\[
0 \rightarrow 
k(Q_0 \parallel Q_0) \oplus k(Q_0 \parallel Q_1)
\stackrel{\matrizDcero}{\longrightarrow}
k(Q_1 \parallel Q_0) \oplus k(Q_1 \parallel Q_1)
\stackrel{\matrizDuno}{\longrightarrow}
\cdots 
\quad
\]
\[
\cdots
\:
k(Q_n \parallel Q_0) \oplus k(Q_n \parallel Q_1)
\stackrel{\matrizDn}{\longrightarrow}
k(Q_{n+1} \parallel Q_0) \oplus k(Q_{n+1} \parallel Q_1).
\]
Cibils proved that $C^\bullet(Q)$
is isomorphic to the reduced 
complex $\bold{R^\bullet(A,A)}$ using the following lemma.
\begin{lemma}[\cite{cibils}]\label{cnera}
Let $A:=kQ / <Q_2>$ where $Q$ is a finite quiver. 
The vector space $C^n_E(r,A)=Hom_{E^e}(r^{\otimes^n_E},A)$ 
is isomorphic to 
\[
k(Q_n \parallel Q_0 \cup Q_1)=
k(Q_n \parallel Q_0) \oplus k(Q_n \parallel Q_1).
\]
\end{lemma}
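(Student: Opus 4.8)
The plan is to exhibit an explicit $E^e$-linear isomorphism between $Hom_{E^e}(r^{\otimes^n_E}, A)$ and $k(Q_n \parallel Q_0) \oplus k(Q_n \parallel Q_1)$ by using the combinatorial bases on both sides. First I would recall that $E = kQ_0$ is a product of copies of $k$ indexed by the vertices, with complete set of orthogonal idempotents $\{e_i\}_{i \in Q_0}$, and that $r = kQ_1$ with the obvious $E$-bimodule structure: an arrow $a$ satisfies $e_i \cdot a = a$ if $i$ is the target of $a$ and $0$ otherwise, and $a \cdot e_j = a$ if $j$ is the source and $0$ otherwise. From this it follows that $r^{\otimes^n_E}$ has a $k$-basis given by the composable $n$-tuples of arrows, i.e.\ by the paths of length $n$; indeed $a_n \otimes_E \cdots \otimes_E a_1$ is nonzero precisely when the source of $a_{i+1}$ equals the target of $a_i$ for all $i$, in which case it corresponds to the path $a_n \cdots a_1 \in Q_n$. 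So $r^{\otimes^n_E} \cong kQ_n$ as an $E$-bimodule, where a path $\gamma^n$ sits in $e_{t(\gamma^n)} (kQ_n) e_{s(\gamma^n)}$.

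Next I would use the adjunction/idempotent description of $E^e$-linear maps. Since $E$ is semisimple commutative, an $E$-bimodule homomorphism $f : r^{\otimes^n_E} \to A$ is determined by its values on the basis paths $\gamma^n$, and $E^e$-linearity forces $f(\gamma^n) = e_{t(\gamma^n)} f(\gamma^n) e_{s(\gamma^n)}$, so $f(\gamma^n)$ lies in the ``parallel'' component $e_{t(\gamma^n)} A\, e_{s(\gamma^n)}$ of $A$. Now decompose $A = kQ_0 \oplus kQ_1$ (the Wedderburn--Malcev decomposition recalled above): the subspace $e_i A e_j$ is spanned by the vertex $e_i$ if $i = j$, together with all arrows from $j$ to $i$. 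Hence assigning to $f$ the family $\big(f(\gamma^n)\big)_{\gamma^n \in Q_n}$ and expanding each value in the basis of $A$ gives exactly an element of
\[
\bigoplus_{\gamma^n \in Q_n} e_{t(\gamma^n)} A\, e_{s(\gamma^n)}
= k(Q_n \parallel Q_0) \oplus k(Q_n \parallel Q_1),
\]
since a pair $(\gamma^n, e)$ with $e$ a vertex contributes iff $\gamma^n \parallel e$, and a pair $(\gamma^n, a)$ with $a$ an arrow contributes iff $\gamma^n \parallel a$. This assignment is visibly $k$-linear, and it is injective (a map vanishing on all basis paths is zero) and surjective (any such family of parallel data defines, by $E^e$-linear extension, a legitimate element of $Hom_{E^e}(r^{\otimes^n_E}, A)$), so it is the desired isomorphism.

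The only genuinely delicate point — and the step I would treat most carefully — is the identification $r^{\otimes^n_E} \cong kQ_n$ as $E$-bimodules and the attendant claim that an $E^e$-linear map out of it is freely and independently prescribable on the parallel components; here one must check that the tensor product over the non-connected (but commutative semisimple) ring $E$ really does collapse incomposable tuples to zero and imposes no further relations, which is a routine but necessary computation with the idempotents $e_i$. Everything else is bookkeeping with the two decompositions $r = kQ_1$ and $A/r = E = kQ_0$. Since this lemma is quoted from \cite{cibils}, I would keep the argument brief and refer the reader there for the verification that this isomorphism is moreover compatible with the differentials, which is what ultimately yields the isomorphism of complexes $C^\bullet(Q) \cong \bold{R^\bullet(A,A)}$.
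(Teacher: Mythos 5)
Your argument is correct and is exactly the standard identification underlying the cited result: $r^{\otimes^n_E}\cong kQ_n$ with basis the length-$n$ paths, and an $E^e$-linear map is freely determined by sending each path $\gamma^n$ into $e_{t(\gamma^n)}A\,e_{s(\gamma^n)}$, whose basis consists of the parallel vertex (if any) and the parallel arrows. The paper itself gives no proof of this lemma, quoting it from \cite{cibils}, and your reconstruction matches the isomorphism used there (the same one invoked later to transport the reduced bracket), so there is nothing to add.
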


\subsection*{The reduced bracket.}
Once we have the combinatorial description of $C^n_E(r,A)$, we are
going to compute the reduced bracket in the same terms. To do so we
use the above lemma.  
We begin by introducing some notation. 
\begin{notation}
Given two paths: $\alpha^n$ in $Q_n$ and $\beta^m$ in $Q_m$, 
we will suppose that
\[
\begin{array}{rcl}
\alpha^n & = & a_1 a_2 \dots a_n \\
\beta^m  & = & b_1 b_2 \dots b_m
\end{array}
\]
where $a_i$ and $b_j$ are in $Q_1$. 
Under this assumption, we say 
that $a_i$ and $b_j$ are {\it{arrows in the decomposition}} 
of $\alpha^n$ and $\beta^m$, respectively.
Let $i=1, \dots , n$,
if $a_i \parallel \beta^m$, 
we denote by $\alpha^n \diamantei \beta^m$ 
the path in $Q_{n+m-1}$ obtained 
by replacing the arrow $a_i$ with the path $\beta^m$. 
This means
\[
\alpha^n \diamantei \beta^m 
:= a_1 \cdots a_{i-1} b_1 \cdots b_m a_{i+1} \cdots a_n 
\]
If $a_i$ is not parallel to $\beta^m$ then  
$\alpha^n \diamantei \beta^m$ has no sense. 
Clearly,  $\diamantei$ is not commutative. 
For example, let $a$ in $Q_1$. If 
$a \parallel \beta^m$ then we have that 
\[
a \diamanteuno \beta^m = \beta^m 
\]
Now, if $b_i \parallel a$ we have that 
\[
\beta^m \diamantei a = b_1 \dots b_{i-1}  a b_{i+1} \dots b_{m} .
\]
\end{notation}
\begin{definition}
Let $Q$ be a finite quiver and $n\geq 1$. 
Fix $i=1, \dots , n$. 
The bilinear map 
\[
\circulito: 
k(Q_n \parallel Q_0 \cup Q_1) 
\times 
k(Q_m \parallel Q_0 \cup Q_1)   
\longrightarrow 
k(Q_{n+m-1} \parallel Q_0 \cup Q_1)
\]
is given by 
\[
(\alpha^n,x) \circulito (\beta^m,y)=
\delta_{a_i,y} \, \cdot \, (\alpha^n \diamantei \beta^m,x )
\]
where 
\[
\delta_{a_i,y}=\begin{cases}
               1 & \text{ if } a_i=y \\
               0 & \text{ otherwise }
               \end{cases}
\]
and $\alpha^n=a_1 \cdots a_i \cdots a_n.$
\end{definition}

Denote by $C^{*+1}(Q)$ the following vector space
\[
C^{*+1}(Q):= \bigoplus_{n=1}^\infty 
k(Q_n \parallel Q_0) \oplus k(Q_n \parallel Q_1) \quad.
\]

\begin{definition}
Let $Q$ be a finite quiver. 
The biliner map 
\[
[\,-\,,\,-\,]_Q:
k(Q_n \parallel Q_0 \cup Q_1 ) 
\times 
k(Q_m \parallel Q_0 \cup Q_1)   
\longrightarrow 
k(Q_{n+m-1} \parallel Q_0 \cup Q_1)
\]
is defined as follows
\[
\begin{array}{rl}
[\, (\alpha^n,x) \, , \, (\beta^m,y)\,]_Q &
= \displaystyle{\sum_{i=1}^n (-1)^{(i-1)(m-1)}} 
(\alpha^n,x) \circulito (\beta^m,y) \\
\, & -(-1)^{(n-1)(m-1)}
\displaystyle{\sum_{i=1}^m (-1)^{(i-1)(n-1)}} 
(\beta^m,y)\circulito (\alpha^n,x).
\end{array}
\]
\end{definition}

\begin{theorem}
Let $Q$ be a finite quiver. 
The vector space 
$C^{*+1}(Q)$ together with 
the bracket $[\,-\,,\,-\,]_Q$ 
is a graded Lie algebra. 
Moreover, 
if $A:=kQ / <Q_2>$ then 
the graded Lie algebra 
$C^{*+1}_E(r,A)$ endowed with 
the reduced bracket is 
isomorphic to 
$C^{*+1}(Q)$ endowed with 
the bracket $[\,-\,,\,-\,]_Q$.
\end{theorem}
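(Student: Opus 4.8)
The plan is to reduce the theorem to a single combinatorial identity and then transport structure. By Proposition~\ref{gradedLiealgebra} we already know that $C^{*+1}_E(r,A)$ equipped with the reduced bracket $\bracketR$ is a graded Lie algebra, and by Lemma~\ref{cnera} that $C^n_E(r,A)=Hom_{E^e}(r^{\otimes^n_E},A)$ is isomorphic to $k(Q_n\parallel Q_0)\oplus k(Q_n\parallel Q_1)$. Assembling these into a graded linear isomorphism $\Psi\colon C^{*+1}(Q)\to C^{*+1}_E(r,A)$, the theorem will follow once I check that $\Psi$ intertwines, for each fixed index $i$, the combinatorial operation $\circulito$ on $C^{*+1}(Q)$ with the operation $\circulito$ on the reduced cochains. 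Indeed, both $\bracketR$ and $[\,-\,,\,-\,]_Q$ are obtained from $\circulito$ by the very same sign-weighted sum $\circR$ and graded commutator, so an isomorphism respecting $\circulito$ automatically respects the brackets, and then $[\,-\,,\,-\,]_Q$ inherits the graded Lie algebra axioms from $\bracketR$ by transport of structure.

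First I would make $\Psi$ explicit. The $E^e$-module $r^{\otimes^n_E}$ has $k$-basis the set $Q_n$ of paths of length $n$; an $E^e$-linear map $f\in Hom_{E^e}(r^{\otimes^n_E},A)$ is determined by the values $f(\gamma^n)$, $\gamma^n\in Q_n$, and $E^e$-linearity forces $f(\gamma^n)$ to lie in $e\,A\,e'$, where $e,e'$ are the idempotents corresponding to the source and the target of $\gamma^n$. Since $A=kQ_0\oplus kQ_1$ has radical square zero, $e\,A\,e'$ is spanned by the vertices and the arrows parallel to $\gamma^n$, so the datum of $f$ is precisely the datum of an element of $k(Q_n\parallel Q_0)\oplus k(Q_n\parallel Q_1)$; I take $\Psi_n$ to send a generator $(\gamma^n,x)$ to the $E^e$-morphism $f_{(\gamma^n,x)}$ mapping $\gamma^n$ to $x$ and every other path of length $n$ to zero, which realizes the isomorphism of Lemma~\ref{cnera}. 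The key step is then the computation of $f_{(\alpha^n,x)}\circulito f_{(\beta^m,y)}$. Evaluated on a basis path $c_1\cdots c_{n+m-1}$ of $Q_{n+m-1}$, the definition of $\circulito$ on the reduced complex gives $f_{(\alpha^n,x)}\big(c_1\tensorE\cdots\tensorE\pi\, f_{(\beta^m,y)}(c_i\tensorE\cdots\tensorE c_{i+m-1})\tensorE\cdots\tensorE c_{n+m-1}\big)$; the inner term vanishes unless $c_i\cdots c_{i+m-1}=\beta^m$ and $y$ is an arrow, in which case it equals $y$, and then the whole expression equals $x$ exactly when $c_1\cdots c_{i-1}\,y\,c_{i+m}\cdots c_{n+m-1}=\alpha^n$, i.e.\ when $y=a_i$ and $c_1\cdots c_{n+m-1}=\alpha^n\diamantei\beta^m$. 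Hence $f_{(\alpha^n,x)}\circulito f_{(\beta^m,y)}=\delta_{a_i,y}\,f_{(\alpha^n\diamantei\beta^m,x)}$, which is $\Psi_{n+m-1}$ applied to $\delta_{a_i,y}\,(\alpha^n\diamantei\beta^m,x)=(\alpha^n,x)\circulito(\beta^m,y)$, as desired. The projection $\pi$ contributes nothing beyond excluding the case of $y$ a vertex, but there $\delta_{a_i,y}=0$ since $a_i$ is an arrow; and the "undefined" case of $\diamantei$ never occurs because we only reach the branch $a_i=y\parallel\beta^m$.

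With that identity established, the proof concludes formally: summing over $i$ with the signs $(-1)^{(i-1)(m-1)}$ shows $\Psi$ intertwines $\circR$ with the corresponding operation on $C^{*+1}(Q)$, passing to the graded commutator shows it intertwines $\bracketR$ with $[\,-\,,\,-\,]_Q$, and transporting the graded Lie algebra structure of Proposition~\ref{gradedLiealgebra} along $\Psi$ proves simultaneously that $\big(C^{*+1}(Q),[\,-\,,\,-\,]_Q\big)$ is a graded Lie algebra and that $\Psi$ is an isomorphism of graded Lie algebras. I expect the main obstacle to be purely bookkeeping: carrying out the single verification $f_{(\alpha^n,x)}\circulito f_{(\beta^m,y)}=\Psi_{n+m-1}\big((\alpha^n,x)\circulito(\beta^m,y)\big)$ while keeping the $E^e$-bimodule structure, the composability constraints on the $c_j$, and the effect of $\pi$ all under control; once this is done everything else is automatic.
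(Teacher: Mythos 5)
Your proposal is correct and follows essentially the same route as the paper: identify $C^{*+1}(Q)$ with $C^{*+1}_E(r,A)$ via Cibils' isomorphism from Lemma~\ref{cnera}, check that the combinatorial $\circulito$ corresponds to the reduced $\circulito$ (hence the brackets correspond), and transport the graded Lie algebra structure already established in Proposition~\ref{gradedLiealgebra}. The only difference is that you carry out explicitly, on the basis elements $f_{(\gamma^n,x)}$, the verification that the paper dismisses as a ``straightforward verification,'' including the correct treatment of $\pi$ and of the vertex case.
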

\begin{proof}
Let $Q$ be a finite quiver and $A:=kQ / <Q_2>$. 
Let us remark that  
$C^{*+1}(Q)$ is isomorphic as a vector space 
to $C^{*+1}(r,A)$ because of lemma \ref{cnera}. 
Using the same isomorphism defined by Cibils to prove 
lemma (\ref{cnera}), a straightfoward verification shows 
that the bracket $[\,-\,,\,-\,]_Q$ is
the combinatorial translation of the reduced bracket. 
\end{proof}

\begin{corollary}\label{resultado}
Let $A:=kQ / <Q_2>$ where $Q$ is a finite quiver. 
The graded Lie algebra structure on 
$HH^{*+1}(A)$ given by the Gerstenhaber bracket 
is induced by the graded Lie algebra structure on 
$C^{*+1}(Q)$ given by $[\,-\,,\,-\,]_Q$.
\end{corollary}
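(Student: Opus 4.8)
The plan is to obtain the corollary by passing to cohomology in the identifications already at hand, so that the work consists only in checking that the several isomorphisms in play are mutually compatible. Write $\Phi\colon C^{*+1}(Q)\to C^{*+1}_E(r,A)$ for the isomorphism of the theorem preceding this corollary. By its proof, $\Phi$ is, componentwise, the very isomorphism Cibils used in Lemma~\ref{cnera}, which he showed to be an isomorphism of complexes $C^\bullet(Q)\stackrel{\sim}{\longrightarrow}\bold{R^\bullet(A,A)}$, carrying the differential $D$ of $C^\bullet(Q)$ to the differential $\delta$ of $\bold{R^\bullet(A,A)}$. Thus $\Phi$ is simultaneously a chain isomorphism and, by that theorem, an isomorphism of graded Lie algebras between $(C^{*+1}(Q),\,[\,-\,,\,-\,]_Q)$ and $(C^{*+1}_E(r,A),\,\bracketR)$.

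From this I would first deduce that $[\,-\,,\,-\,]_Q$ descends to cohomology. Transporting the coboundary identity of Lemma~\ref{diferencial} through $\Phi$, and using that $\Phi$ commutes with the differentials, gives for homogeneous $u$ in the $Q_n$-component and $v$ in the $Q_m$-component of $C^{*+1}(Q)$ the relation
\[
D[\,u\,,\,v\,]_Q=[\,u\,,\,Dv\,]_Q+(-1)^{m-1}[\,Du\,,\,v\,]_Q .
\]
Hence $[\,-\,,\,-\,]_Q$ induces a graded bracket on $H^{*+1}(C^\bullet(Q))$, and, $\Phi$ being a chain isomorphism, it induces an isomorphism $H^\bullet(C^\bullet(Q))\cong H^\bullet(\bold{R^\bullet(A,A)})=HH^\bullet(A)$ under which the bracket induced by $[\,-\,,\,-\,]_Q$ corresponds to the bracket induced by $\bracketR$ on $HH^{*+1}(A)$; the latter is well defined by Lemma~\ref{diferencial}.

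It then remains only to compose with the comparison between the reduced and Gerstenhaber brackets. By the proposition following Lemma~\ref{diferencial}, the automorphism $\overline{p^*}$ of $HH^{*+1}(A)$ intertwines the induced reduced bracket with the Gerstenhaber bracket; composing $\overline{p^*}$ with the isomorphism of the preceding paragraph produces an isomorphism of graded Lie algebras from $H^{*+1}(C^\bullet(Q))$, equipped with the bracket induced by $[\,-\,,\,-\,]_Q$, onto $HH^{*+1}(A)$ equipped with the Gerstenhaber bracket, which is the assertion of the corollary. I do not expect a genuine obstacle here: each nontrivial verification --- that $\Phi$ is a chain map carrying $\bracketR$ to $[\,-\,,\,-\,]_Q$, and the two bracket comparisons --- has already been performed in the preceding sections, so the only new ingredient is the elementary transport of the coboundary identity displayed above.
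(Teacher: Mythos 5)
Your argument is correct and is essentially the proof the paper intends: the corollary is meant to follow immediately by combining the theorem (Cibils' isomorphism identifies $[\,-\,,\,-\,]_Q$ with the reduced bracket, compatibly with the differentials) with Lemma~\ref{diferencial} and the proposition comparing the reduced and Gerstenhaber brackets on $HH^{*+1}(A)$. You have merely made explicit the chain of identifications that the paper leaves implicit, so no new ideas or corrections are needed.
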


\section{Lie module structure of $HH^n(A)$ 
over $HH^1(A)$.}
In this section, we are going to study the Lie module structure of 
$HH^n(A)$ over $HH^1(A)$ when $A:=kQ/Q_2$ in two cases. The first case
is when $Q$ is a loop and the second case is when $Q$ is a two loops quiver. 
\subsection*{The one loop case.}
It is shown in \cite{cibils} that if $char \, k =0$ and $Q$ is the one
loop quiver then  
the function $D_n$, given by the equation (\ref{Dene}), 
is zero when $n$ is  even  and 
$D_{n}$ is injective when $n$ is odd. 
In fact we have the following proposition:
\begin{propo}[\cite{cibils}]
Assume that 
$Q$ is the one loop quiver. 
Let $k$ be a field of characteritic zero 
and $A:=kQ/<Q_2>$. 
Then we have that $HH^{0}(A) \cong  A$
and for $n>0$ we have that 
\[
HH^n(A) \cong
\begin{cases}
\displaystyle k (Q_n \parallel Q_0)  
& \text{if n
  is even} \\
\, & \,\\
 \displaystyle k (Q_n \parallel Q_1) & 
\text{if n is odd \qquad \qquad } \\
\end{cases}
\]
Therefore, for $n \geq 0$ 
the Hochschild cohomology group $HH^n(A)$ is one dimensional.
\end{propo}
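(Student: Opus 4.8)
The plan is to read off $HH^n(A)$ from Cibils' combinatorial complex $C^\bullet(Q)$, which by Lemma~\ref{cnera} and the ensuing identification of $C^\bullet(Q)$ with the reduced complex computes the Hochschild cohomology, so that $HH^n(A)=H^n(C^\bullet(Q))$. First I would write down the terms of this complex for the one-loop quiver. Here $Q_0=\{e\}$, $Q_1=\{a\}$, and for each $n$ the set $Q_n$ consists of the single path $a^n$; since $a^n$ is an oriented cycle at $e$ and is parallel to $a$, both $k(Q_n\parallel Q_0)$ and $k(Q_n\parallel Q_1)$ are one-dimensional, and hence $C^n(Q)=k(Q_n\parallel Q_0)\oplus k(Q_n\parallel Q_1)$ is two-dimensional for every $n\ge 0$.

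Next I would record the shape of the differential: the map $C^n(Q)\to C^{n+1}(Q)$ sends $(u,v)$ to $(0,D_nu)$, so it annihilates the summand $k(Q_n\parallel Q_1)$ and has image inside $0\oplus k(Q_{n+1}\parallel Q_1)$. Thus everything is controlled by $D_n\colon k(Q_n\parallel Q_0)\to k(Q_{n+1}\parallel Q_1)$, and for the one-loop quiver, where $Q_1e=eQ_1=\{a\}$, evaluating formula~(\ref{Dene}) gives
\[
D_n(a^n,e)=(a^{n+1},a)+(-1)^{n+1}(a^{n+1},a)=\bigl(1+(-1)^{n+1}\bigr)(a^{n+1},a).
\]
Hence $D_n=0$ when $n$ is even, while for $n$ odd $D_n$ is multiplication by $2$, which since $\mathrm{char}\,k=0$ and both spaces are one-dimensional is an isomorphism. (This is exactly the fact about $D_n$ recalled from \cite{cibils} just before the statement; one may quote it rather than re-derive it.)

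The cohomology computation is then a short bookkeeping by parity. For $n$ odd, $n\ge 1$: $\ker d_n=0\oplus k(Q_n\parallel Q_1)$ since $D_n$ is injective, and $\operatorname{im}d_{n-1}=0$ since $D_{n-1}=0$ ($n-1$ even); so $HH^n(A)\cong k(Q_n\parallel Q_1)$. For $n$ even, $n\ge 2$: $\ker d_n=C^n(Q)$ since $D_n=0$, and $\operatorname{im}d_{n-1}=0\oplus k(Q_n\parallel Q_1)$ since $D_{n-1}$ is an isomorphism ($n-1$ odd); so $HH^n(A)\cong C^n(Q)/\bigl(0\oplus k(Q_n\parallel Q_1)\bigr)\cong k(Q_n\parallel Q_0)$. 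For $n=0$, $D_0=0$, so $HH^0(A)=\ker d_0=C^0(Q)\cong A$, in agreement with $HH^0(A)=Z(A)=A$ as $A=k[a]/(a^2)$ is commutative. Since each of $k(Q_n\parallel Q_0)$ and $k(Q_n\parallel Q_1)$ is one-dimensional, the displayed isomorphisms yield the asserted description and the stated dimension count for $n\ge 1$.

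There is no real obstacle here: the only non-formal step is the evaluation of $D_n$, which is a two-term computation (and is in any case already granted), and the rest is elementary linear algebra in a complex all of whose terms are at most two-dimensional. The one point to be careful about is the off-diagonal block form of the differential, which is precisely what decouples the two columns $k(Q_\bullet\parallel Q_0)$ and $k(Q_\bullet\parallel Q_1)$ and produces the clean even/odd dichotomy.
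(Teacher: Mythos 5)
Your proof is correct and takes essentially the same route the paper relies on: the paper simply quotes this proposition from Cibils together with the parity behaviour of $D_n$ (zero for $n$ even, injective for $n$ odd), and your evaluation of $D_n(a^n,e)=\bigl(1+(-1)^{n+1}\bigr)(a^{n+1},a)$ plus the block-form bookkeeping in the two-dimensional complex $C^\bullet(Q)$ is exactly the verification that is left implicit. No gap to report.
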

\begin{proposition}
Assume that $Q$ is the one loop quiver,  
where $e$ is the vertex and $a$ is the loop.
Let $k$ be a field of characteritic zero 
and $A:=kQ / <Q_2>$. 
Then 
$HH^1(A)$ is the one dimensional (abelian) Lie algebra and 
the Lie module structure on the Hochschild cohomology groups given by
the Gerstenhaber bracket 
\[
HH^1(A) \times HH^n(A) \longrightarrow HH^n(A)
\]
is induced by the following morphisms:

If $n$ is even, we have that 
\[
\scalebox{0.97}{$
k(Q_1 \parallel Q_1) \times k(Q_n \parallel Q_0) 
\longrightarrow k(Q_n\parallel Q_0)
$}
\]
is given as follows 
\[
(a,a).(a^n,e) = - \, n \, (a^n,e).
\]
If $n$ is odd, we have that 
\[
\scalebox{0.97}{$
k(Q_1 \parallel Q_1) \times k(Q_n \parallel Q_1) 
\longrightarrow k(Q_n\parallel Q_1)
$}
\]
is given as follows 
\[
(a,a).(a^n,a) = - \, (n-1) \, (a^n,a).
\]
So, the Lie module $HH^n(A)$ over $HH^1(A)$ corresponds to the 
one dimensional standard module over $k$. 
\end{proposition}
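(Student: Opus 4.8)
The plan is to push the whole question into the combinatorial model of Corollary \ref{resultado}. That corollary identifies the Gerstenhaber bracket on $HH^{*+1}(A)$ with the bracket induced on cohomology by $[\,-\,,\,-\,]_Q$ on $C^{*+1}(Q)$, so it suffices to work inside $C^\bullet(Q)$ for the one loop quiver, with vertex $e$ and loop $a$. Here $Q_n=\{a^n\}$, so each of $k(Q_n\parallel Q_0)$ and $k(Q_n\parallel Q_1)$ is one-dimensional, spanned respectively by $(a^n,e)$ and $(a^n,a)$.

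First I would fix the cocycle representatives. The differential of $C^\bullet(Q)$ carries $k(Q_n\parallel Q_0)$ into $k(Q_{n+1}\parallel Q_1)$ via $D_n$ and annihilates $k(Q_n\parallel Q_1)$; combined with the fact, recalled from \cite{cibils}, that $D_n=0$ for $n$ even while $D_n$ is injective for $n$ odd, this shows that $HH^n(A)$ is represented by $(a^n,e)$ when $n$ is even (every element is then a cocycle, and $(a^n,a)$ is a coboundary because $D_{n-1}$ is an isomorphism of one-dimensional spaces) and by $(a^n,a)$ when $n$ is odd (then $\ker D_n=0$ and $D_{n-1}=0$, so there are no coboundaries in that degree). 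In particular $HH^1(A)\cong k(Q_1\parallel Q_1)=k\,(a,a)$ is one-dimensional, hence an abelian Lie algebra.

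Next I would evaluate $[\,-\,,\,-\,]_Q$ on these representatives, noting that since $(a,a)$ lies in degree $0$ all the signs occurring in the bracket formula are $+1$. For $n$ even, the first sum of $[\,(a,a)\,,\,(a^n,e)\,]_Q$ has a single term, equal to $\delta_{a,e}\,(a\diamanteuno a^n,a)=0$ because an arrow is never a vertex; the second sum is $\sum_{i=1}^{n}\delta_{a,a}\,(a^n\diamantei a,e)=n\,(a^n,e)$ since $a^n\diamantei a=a^n$, and it enters with a minus sign, so $(a,a).(a^n,e)=-\,n\,(a^n,e)$. For $n$ odd, the same computation with $(a^n,a)$ in place of $(a^n,e)$ gives $\delta_{a,a}\,(a\diamanteuno a^n,a)=(a^n,a)$ for the first sum and $\sum_{i=1}^{n}\delta_{a,a}\,(a^n\diamantei a,a)=n\,(a^n,a)$ for the second, hence $(a,a).(a^n,a)=(a^n,a)-n\,(a^n,a)=-(n-1)\,(a^n,a)$.

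Finally, since $HH^n(A)$ is one-dimensional and the generator $(a,a)$ of $HH^1(A)$ acts on it by a scalar, $HH^n(A)$ is --- up to rescaling that generator whenever the scalar is nonzero --- the standard one-dimensional module over the one-dimensional abelian Lie algebra $k$; for $n=1$ the scalar $-(n-1)$ is zero, consistently with $HH^1(A)$ acting trivially on itself. The only point that requires any care is the first step, namely choosing the right representatives and checking that no coboundary correction is needed when passing to cohomology, and this is immediate from the explicit shape of the differentials of $C^\bullet(Q)$; the remaining computation is a routine unwinding of the definition of $[\,-\,,\,-\,]_Q$, so there is no real obstacle.
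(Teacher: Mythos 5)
Your proof is correct and takes essentially the same route as the paper: the paper's proof just invokes Corollary \ref{resultado} and the definition of $[\,-\,,\,-\,]_Q$, which is exactly the unwinding you carry out, with the identification of the cocycle representatives $(a^n,e)$ and $(a^n,a)$ coming from Cibils' computation of $D_n$ that the paper quotes immediately before the proposition.
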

\begin{proof}
It is an immediate consequence of 
the definition of the bracket $[\, - \, , \, - \, ]_Q$ 
and the corollary \ref{resultado}. 
\end{proof}
Moreover, we have that
\begin{proposition} 
Let $k$ be a field of characteritic zero, 
$Q$ the one loop quiver and $A:=kQ/<Q_2>$. 
The Lie algebra $HH^{odd}$ is the infinite dimensional Witt algebra. 
\end{proposition}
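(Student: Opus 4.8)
The plan is to compute the combinatorial bracket $[\,-\,,\,-\,]_Q$ of the previous section explicitly on the odd part of $C^{*+1}(Q)$, transport the answer to cohomology via Corollary~\ref{resultado}, and recognise the resulting structure constants as those of the Witt algebra after a reindexing. First I would assemble the ingredients. By the preceding proposition (from \cite{cibils}), for $n$ odd one has $HH^n(A)\cong k(Q_n\parallel Q_1)$, which is one-dimensional and generated by the class of $(a^n,a)$, where $a^n$ denotes the path $a\cdots a$ of length $n$; this element sits in the $k(Q_n\parallel Q_1)$ summand of the $n$-th term of $C^{\bullet}(Q)$, and that summand is annihilated by the differential $\matrizDn$, so $(a^n,a)$ is a genuine cocycle. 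Moreover $[HH^n(A),HH^m(A)]\subseteq HH^{n+m-1}(A)$, and $n+m-1$ is odd whenever $n$ and $m$ are, so $HH^{odd}$ is a graded Lie subalgebra of $HH^{*+1}(A)$, whose bracket is computed, via Corollary~\ref{resultado}, from $[\,-\,,\,-\,]_Q$ applied to the representatives $(a^n,a)$.

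The central computation is then short. Since $a$ is the only arrow, for every $i$ the $i$-th arrow in the decomposition of $a^n$ equals $a$, so $\delta_{a_i,y}=1$ (with $y=a$) and $a^n\diamantei a^m=a^{n+m-1}$; hence $(a^n,a)\circulito(a^m,a)=(a^{n+m-1},a)$ for each $i=1,\dots,n$. Because $n-1$ and $m-1$ are even, every sign $(-1)^{(i-1)(m-1)}$, $(-1)^{(i-1)(n-1)}$ and $(-1)^{(n-1)(m-1)}$ equals $1$, so the defining formula for $[\,-\,,\,-\,]_Q$ collapses to
\[
[(a^n,a),(a^m,a)]_Q = n\,(a^{n+m-1},a) - m\,(a^{n+m-1},a) = (n-m)\,(a^{n+m-1},a).
\]
Writing $L_n$ for a generator of $HH^n(A)$ with $n$ odd, this reads $[L_n,L_m]=(n-m)\,L_{n+m-1}$ in $HH^{odd}$.

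Finally I would rewrite this in the standard form. Putting $n=2p+1$, the Gerstenhaber-degree-$2p$ subspace $HH^{2p+1}(A)$ is one-dimensional, say $k\,L_p$ with $p\geq 0$, and the relation becomes $[L_p,L_q]=2(p-q)\,L_{p+q}$; after rescaling $L_p$ by $\tfrac12$ it becomes $[L_p,L_q]=(p-q)\,L_{p+q}$, which is precisely the defining bracket of the infinite-dimensional Witt algebra $\bigoplus_{p\geq 0}k\,L_p$ (for instance, $L_p$ corresponds to $-t^{p+1}\tfrac{d}{dt}$ inside the derivation Lie algebra of $k[t]$). I do not expect a genuine obstacle here: the only delicate points are bookkeeping --- tracking Gerstenhaber's degree shift $n\mapsto n-1$, absorbing the factor $2$ in the rescaling, and noting that $HH^n(A)=0$ for $n$ even so that exactly the non-negative part of the Witt algebra appears --- while the bracket itself is a one-line consequence of the sign cancellations available in odd degrees.
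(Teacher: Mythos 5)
Your proposal is correct and follows essentially the same route as the paper: the paper's entire proof is the single computation $[\,(a^n,a)\,,\,(a^m,a)\,]_Q=(n-m)\,(a^{n+m-1},a)$ for $n,m$ odd, obtained exactly as you do from the combinatorial bracket (all signs trivial since $n-1$, $m-1$ are even) together with Corollary~\ref{resultado}. Your additional bookkeeping --- checking that $(a^n,a)$ represents $HH^n(A)$, reindexing $n=2p+1$ and rescaling by $\tfrac12$ to reach the standard relations $[L_p,L_q]=(p-q)L_{p+q}$ --- is detail the paper leaves implicit, and it is sound.
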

\begin{proof}
If $n$ and $m$ are odd then, using the formula for the bracket, we have  
$$[\, (a^n,a) \, , \, (a^m,a) \, ]_Q= \, (n-m)  \, (a^{n+m-1},a) \, .$$
\end{proof}

\subsection*{The two loops case.}

In \cite{cibils}, Cibils proved that the function 
$D_n$, given by the equation (\ref{Dene}),
is injective for $n \geq 1$
when $Q$ is neither a loop nor an oriented cycle.
Hence we have the following result:
\begin{teo}[\cite{cibils}]
Let $A:=kQ/<Q_2>$ where $Q$ is the two loops quiver. 
Then, $HH^0(A)=A$
and for $n \geq 1$
$$HH^n(A) \cong 
\frac{k(Q_n \parallel Q_1)}{Im\, D_{n-1}}$$
where 
\[
D_{n-1}:k(Q_{n-1} \parallel Q_0) \longrightarrow k(Q_n \parallel Q_1) 
\]
is given by the formula (\ref{Dene}). 
Moreover, we have that for $n > 1$, 
$$
dim_k HH^n(A)= 2^{n+1}-2^{n-1} \, .
$$ 
\end{teo}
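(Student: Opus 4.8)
The plan is to work entirely inside the combinatorial complex $C^\bullet(Q)$, which by Cibils' lemma computes $HH^*(A)$, and to exploit the very simple shape of the differential for the two loops quiver. Denote the two loops by $a$ and $b$, so $Q_0=\{e\}$, $Q_1=\{a,b\}$, and $Q_n$ is the set of $2^n$ words of length $n$ in $a,b$; every such word is an oriented cycle at $e$, and every arrow is a shortcut of every word, so $Q_n\parallel Q_0$ has a basis indexed by words of length $n$ and $k(Q_n\parallel Q_1)$ has a basis indexed by pairs (word of length $n$, choice of $a$ or $b$). Thus $\dim_k k(Q_n\parallel Q_0)=2^n$ and $\dim_k k(Q_n\parallel Q_1)=2^{n+1}$. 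The first step is to invoke the cited result of Cibils that $D_n$ is injective for $n\ge 1$ (since the two loops quiver is neither a loop nor an oriented cycle), and to recall that the complex $C^\bullet(Q)$ has differential $\bigl(\begin{smallmatrix}0&0\\ D_n&0\end{smallmatrix}\bigr)$, so that $k(Q_n\parallel Q_1)$ carries the trivial incoming and outgoing differentials while $k(Q_n\parallel Q_0)$ maps into $k(Q_{n+1}\parallel Q_1)$ via $D_n$ and receives nothing.

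From this the cohomology is immediate: in cohomological degree $n\ge 1$ the space of cocycles is all of $k(Q_n\parallel Q_0)\oplus k(Q_n\parallel Q_1)$ intersected with the kernel of the differential, i.e. $0\oplus k(Q_n\parallel Q_1)$ since $D_n$ is injective for $n\ge1$; and the coboundaries are the image of $D_{n-1}$ inside $k(Q_n\parallel Q_1)$. Hence
\[
HH^n(A)\cong \frac{k(Q_n\parallel Q_1)}{\operatorname{Im}D_{n-1}}
\]
for $n\ge 1$, which is the first assertion. (The $n=0$ statement $HH^0(A)=A$ is the general fact that $HH^0$ is the center, here all of $A$ since $A$ is "local" with one vertex; alternatively $HH^0$ is the degree-zero cohomology of $C^\bullet(Q)$, namely $\ker\bigl(\begin{smallmatrix}0&0\\ D_0&0\end{smallmatrix}\bigr)=k(Q_0\parallel Q_0)\oplus\ker D_0$, and one checks $D_0$ is injective only up to the constants, matching $\dim A=3$.)

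The dimension count then follows by a rank–nullity bookkeeping. For $n\ge 1$ we have $\operatorname{Im}D_{n-1}\subseteq k(Q_n\parallel Q_1)$ with $\dim\operatorname{Im}D_{n-1}=\dim k(Q_{n-1}\parallel Q_0)=2^{n-1}$ because $D_{n-1}$ is injective (for $n-1\ge1$, i.e. $n\ge2$). Therefore for $n>1$,
\[
\dim_k HH^n(A)=\dim_k k(Q_n\parallel Q_1)-\dim_k\operatorname{Im}D_{n-1}=2^{n+1}-2^{n-1},
\]
which is the claimed formula. The only genuinely nontrivial input is the injectivity of $D_n$, but that is quoted from Cibils and may be assumed here, so the main thing to be careful about is the off-by-one in the index of $D$ (the map landing in degree $n$ is $D_{n-1}$, whose source has dimension $2^{n-1}$) and the fact that the formula is stated for $n>1$ precisely because $D_0$ fails to be injective on the one-dimensional space of constants (so $HH^1$ has dimension $2^{2}-(2^{0}-1)=4$, not $2^2-2^0=3$), which is why the boundary case is excluded from the displayed identity. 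I would close by remarking that the differential's strictly lower-triangular block form is what makes the whole computation collapse to a single injective map in each degree, so no spectral sequence or long exact sequence argument is needed.
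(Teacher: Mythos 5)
Your argument is correct and takes essentially the same route as the paper, which simply quotes Cibils' theorem and derives it from the one input you also use: the injectivity of $D_n$ for $n\geq 1$ applied to the complex $C^\bullet(Q)$ with its strictly lower-triangular differential, followed by the dimension count $2^{n+1}-2^{n-1}$. The only blemish is your parenthetical on $HH^0$: the degree-zero kernel is $\ker D_0\oplus k(Q_0\parallel Q_1)$, and for the one-vertex quiver $D_0$ is identically zero, so this has dimension $3$ (it is not $k(Q_0\parallel Q_0)\oplus\ker D_0$); in any case your observation that $A$ is commutative, so $HH^0(A)=Z(A)=A$, already settles that case.
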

\begin{theorem}\label{formula}
Let $A:=kQ / <Q_2>$ where $Q$ is a finite quiver. 
If $Q$ is not an oriented cycle then 
the Lie module structure on the Hochschild cohomology groups given by
the Gerstenhaber bracket 
\[
HH^1(A) \times HH^n(A) \longrightarrow HH^n(A)
\]
is induced by the following bilinear map:
\[
\scalebox{0.97}{$
k(Q_1 \parallel Q_1) \times k(Q_n \parallel Q_1) 
\longrightarrow k(Q_n\parallel Q_1)
$}
\]
given as follows 
\[
\displaystyle{
(a,x).(\alpha^n,y) 
= \delta_{y,a} \cdot (\alpha^n,a) - 
\sum_{i=1}^n \delta_{x,a_i} \cdot (\alpha^n \underset{i}{\diamond} x,y)
}
\]
where $a \parallel x$ and $y$ is a shortcut of the path $\alpha^n$
whose decomposition into arrows is given by  
$\alpha^n=a_1 \cdots a_i \cdots a_n$. 
The path $\alpha^n \underset{i}{\diamond} x$ is obtained 
by replacing $a_i$ with $x$ if $a_i = y$
\end{theorem}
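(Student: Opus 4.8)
The plan is to reduce everything to Corollary \ref{resultado}: it suffices to evaluate the bracket $[\,-\,,\,-\,]_Q$ on $C^{*+1}(Q)$ and then pass to the cohomology of the complex $C^\bullet(Q)$. First I would recall, from the result of Cibils quoted above, that since $Q$ is not an oriented cycle the maps $D_m$ of (\ref{Dene}) are injective for all $m\geq 1$. Since in each degree $m$ the differential of $C^\bullet(Q)$ annihilates the summand $k(Q_m\parallel Q_1)$ and restricts to $D_m$ on $k(Q_m\parallel Q_0)$, this yields $HH^m(A)\cong k(Q_m\parallel Q_1)/\operatorname{Im}D_{m-1}$ for every $m\geq 1$. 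Hence a class of $HH^1(A)$ is represented by some $(a,x)\in k(Q_1\parallel Q_1)$ and a class of $HH^n(A)$ by some $(\alpha^n,y)\in k(Q_n\parallel Q_1)$, both of these being cocycles automatically, and the Lie action is $(a,x).(\alpha^n,y)=\overline{[(a,x),(\alpha^n,y)]_Q}$. So the theorem amounts to an explicit evaluation of this bracket.

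For that evaluation I would specialize the definition of $[\,-\,,\,-\,]_Q$ to the case in which the first factor has length $1$, hence degree $0$. Every sign $(-1)^{(i-1)(m-1)}$, $(-1)^{(n'-1)(m-1)}$, $(-1)^{(i-1)(n'-1)}$ occurring in the formula for the bracket (with $n'=1$) then equals $1$, so that $[(a,x),(\alpha^n,y)]_Q=(a,x)\,\underset{1}{\circ}\,(\alpha^n,y)-\sum_{i=1}^n(\alpha^n,y)\circulito(a,x)$. Now I would expand each $\circulito$ by its combinatorial definition: for the first term, the length-one path $a$ has $a$ as its only arrow, so one obtains a Kronecker multiple of $(a\diamanteuno\alpha^n,x)=(\alpha^n,x)$, since replacing the unique arrow of the path $a$ by $\alpha^n$ returns $\alpha^n$; for the $i$-th summand, writing $\alpha^n=a_1\cdots a_n$, one obtains a Kronecker multiple of $(\alpha^n\diamantei a,y)$, where $\alpha^n\diamantei a$ replaces $a_i$ by the single arrow $a$. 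Reading off the Kronecker factors and collecting the two contributions reproduces the displayed bilinear map; I would also observe that both contributions carry an arrow in their second slot, so the result stays in $k(Q_n\parallel Q_1)$ and has no component in $k(Q_n\parallel Q_0)$ --- which is exactly why the induced action is described by a map $k(Q_1\parallel Q_1)\times k(Q_n\parallel Q_1)\to k(Q_n\parallel Q_1)$.

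Finally I would verify that this bilinear map passes to the quotients defining $HH^1(A)$ and $HH^n(A)$, i.e.\ that it is independent of the cocycle representatives chosen modulo $\operatorname{Im}D_0$ in the first variable and modulo $\operatorname{Im}D_{m-1}$ in the second. This is a formal consequence of Lemma \ref{diferencial} transported through Corollary \ref{resultado}, and it can also be checked by hand: every element of $k(Q_1\parallel Q_1)$ is already a cocycle, so the Leibniz identity $\delta[(a,x),-]_Q=\pm[(a,x),\delta(-)]_Q$ shows that bracketing with $(a,x)$ sends cocycles to cocycles and coboundaries to coboundaries in each slot. I expect no conceptual difficulty here; the only place where care is needed --- and it remains routine --- is the middle step, namely keeping track of which of $a$, $x$, $y$ plays the role of the length-$n$ path and which the role of the shortcut in each application of $\circulito$, together with the check that all the signs drop out because the left-hand factor has degree $0$.
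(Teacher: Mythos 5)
Your proposal is correct and follows essentially the same route as the paper, whose entire proof is the observation that the formula is an immediate consequence of the definition of $[\,-\,,\,-\,]_Q$ together with Corollary \ref{resultado}; you simply spell out the reduction to the complex $C^\bullet(Q)$, the sign bookkeeping (all signs are $+1$ because the degree-$0$ factor annihilates every exponent), the evaluation of $\circulito$, and the descent to cohomology already guaranteed by Lemma \ref{diferencial}. Note only that your evaluation actually yields $\delta_{y,a}\,(\alpha^n,x)-\sum_{i=1}^n\delta_{x,a_i}\,(\alpha^n\diamantei a,\,y)$, i.e.\ the intended reading of the displayed formula (whose printed version has slips in the second entries and in the condition ``$a_i=y$''), and this is the version consistent with the $sl_2$ relations and eigenvalue computations used later in the paper.
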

\begin{proof}
It is an immediate consequence of 
the definition of the bracket $[\, - \, , \, - \, ]_Q$ 
and the corollary \ref{resultado}. 
\end{proof}
In \cite{strametz}, Strametz 
studies the Lie algebra structure on the first
Hochschild cohomology group for monomial algebras. 
She formulates the Lie bracket on $HH^1(A)$ 
using the combinatorics of the quiver. 
Let us remark that the formula given by the above theorem  
gives the Lie bracket on $HH^1(A)$ when we set $n=1$. 
Such formula coincides with the one given in \cite{strametz}. 
Let us describe the Lie algebra $HH^1(A)$.

\begin{proposition}
Assume that $Q$ is the two loops quiver where $e$ is
the vertex and the loops are denoted by $a$ and $b$. 
Let $A:=\mathbb{C}Q/<Q_2>$ where 
$\mathbb{C}$ is the complex number field. 
Then the elements 
\[
\begin{array}{rcl}
H&:=&(b,b)-(a,a) \\
E&:=&(a,b)       \\
F&:=&(b,a) 
\end{array}
\] 
generate a copy of the Lie algebra $sl_2(\mathbb{C})$ 
in $HH^1(A)$.
Moreover, the Lie algebra $HH^1(A)$ is isomorphic to 
$sl_2(\mathbb{C}) \times \mathbb{C}$.
\end{proposition}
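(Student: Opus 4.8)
The plan is to pin down $HH^1(A)$ explicitly, first as a vector space and then with its Lie bracket, next exhibit the $sl_2$-triple, and finally split off a central line.

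First I would compute $HH^1(A)$ from the complex $C^\bullet(Q)$. Here $Q_0=\{e\}$ and $Q_1=\{a,b\}$, and since $a,b$ are loops every two elements of $Q_0\cup Q_1$ are parallel; in particular $k(Q_1\parallel Q_1)$ has basis $\{(a,a),(a,b),(b,a),(b,b)\}$. Evaluating $D_0$ via formula (\ref{Dene}) gives
\[
D_0(e,e)=\bigl((a,a)+(b,b)\bigr)-\bigl((a,a)+(b,b)\bigr)=0,
\]
the cancellation being a feature of loops (for a non-loop arrow the two sums in (\ref{Dene}) range over different sets of arrows, so $D_0$ need not vanish). Hence $\operatorname{Im}D_0=0$, and since $D_1$ is injective by Cibils' theorem, the kernel of the differential $C^1(Q)\to C^2(Q)$ is exactly the summand $k(Q_1\parallel Q_1)$. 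Therefore $HH^1(A)\cong k(Q_1\parallel Q_1)$ is four-dimensional on the basis above, and by Corollary \ref{resultado} its Lie bracket is the one induced by $[\,-\,,\,-\,]_Q$; as $\operatorname{Im}D_0=0$ no quotient intervenes and the computation may be carried out on these representatives directly.

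Next I would unwind the definitions of $\circulito$ and of $[\,-\,,\,-\,]_Q$ in the case where both paths have length one (which is the content of Theorem \ref{formula} for $n=1$). Writing an element of $k(Q_1\parallel Q_1)$ as a pair $(c,d)$ of loops, this yields
\[
[(c,d),(c',d')]_Q=\delta_{c,d'}\,(c',d)-\delta_{c',d}\,(c,d').
\]
Substituting $H=(b,b)-(a,a)$, $E=(a,b)$, $F=(b,a)$ is then a one-line verification in each instance and gives $[H,E]_Q=2E$, $[H,F]_Q=-2F$, $[E,F]_Q=H$; since $H,E,F$ are visibly linearly independent in $HH^1(A)$, they span a subalgebra isomorphic to $sl_2(\mathbb{C})$.

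Finally, for the global structure I would set $Z:=(a,a)+(b,b)$ and check, again via the displayed formula on the four basis elements, that $[Z,-]_Q=0$, so $\mathbb{C}Z$ is a central ideal. As $\{H,E,F,Z\}$ is a basis of $HH^1(A)$ with $\operatorname{span}\{H,E,F\}$ a subalgebra and $\mathbb{C}Z$ central, both summands are ideals with trivial intersection, whence $HH^1(A)=\operatorname{span}\{H,E,F\}\oplus\mathbb{C}Z\cong sl_2(\mathbb{C})\times\mathbb{C}$ as Lie algebras. (Equivalently, $(c,d)\mapsto E_{dc}$ realises $HH^1(A)$ as $gl_2(\mathbb{C})$, under which $H,E,F$ become a standard triple.) The argument is essentially routine bookkeeping; the only steps demanding a little care are the vanishing $D_0=0$, which is precisely what makes $HH^1(A)$ four- rather than three-dimensional, and the correct specialisation of the circle operation $\circulito$ to paths of length one.
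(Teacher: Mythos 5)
Your proof is correct and follows essentially the same route as the paper: identify $HH^1(A)$ with $k(Q_1\parallel Q_1)$ on the basis $(a,a),(a,b),(b,a),(b,b)$, verify the relations $[H,E]_Q=2E$, $[H,F]_Q=-2F$, $[E,F]_Q=H$ from the combinatorial bracket, and check that $(a,a)+(b,b)$ is central. The only difference is that you justify the four-dimensionality explicitly by computing $D_0=0$ and invoking the injectivity of $D_1$, a point the paper leaves implicit.
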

\begin{proof}
First notice that $HH^1(A) \cong k(Q_1 \parallel Q_1)$ and that  
the elements $H$, $E$, $F$ and  $I:=(a,a)+(b,b)$ form a basis 
of $HH^1(A)$. A straightforward verification 
of the following relations:
\[
[\, H \, ,  \, E \,]_Q    = 2E ,  \quad 
[\, H \, ,  \, F \,]_Q   = -2F ,  \quad 
[\, E \, ,  \, F \,]_Q    = H 
\]
proves that $HH^1(A)$ contains a copy of $sl_2\mathbb{C}$. 
Finally, it is easy to see that 
\[
[\, I \, ,  \, H \,]_Q    = 0 ,  \quad 
[\, I \, ,  \, E \,]_Q    = 0 ,  \quad 
[\, I \, ,  \, F \,]_Q    = 0 ,
\] 
\end{proof}

In order to study the Lie module $HH^n(A)$ over $HH^1(A)$, 
we will study $HH^n(A)$ as a $sl_2(\mathbb{C})$-module. 
Now, let us recall two
classical Lie theory results, see \cite{erdmann,fulton} for more detail.
\begin{enumerate}
\item Every (finite dimensional) $sl_2 \mathbb{C}$-module has a
  decomposition into direct sum of irreducible modules
\item Classification of irreducible $sl_2 \mathbb{C}$-modules: 
there exists an unique irreducible module for each dimension. We
denote by $V(t)$ the irreducible $sl_2 \mathbb{C}$ module of dimension
$t+1$.
\end{enumerate}
Using the above notation, this means that 
$HH^n(A)$ has a decomposition into direct sum of
irreducible modules over $sl_2 \mathbb{C}$ as follows: 
\[
HH^n(A)= \bigoplus_{t=0}^\infty V(t)^{q_t}
\]
We will determine each $q_t$ and to do
so we will use the usual tools of the classical Lie theory. 
We begin by calculating the eigenvector spaces of $H$ as
endomorphism of $k(Q_n \parallel Q_0)$ and $Im \, D_{n-1}$. 

Given a path $\gamma^n$ in $Q_n$ we denote by 
$a(\gamma^n)$ the number of times that the arrow "$a$" appears in the
decomposition of $\gamma^n$. We also denote by 
$b(\gamma^n)$ the number of times that the arrow "$b$" appears in the
decomposition of $\gamma^n$.
\begin{map}[$v$] 
Define $v$ as the function given by: 
\[
\begin{array}{rrll}
v_n: & Q_n        & \rightarrow & \mathbb{Z} \\
\,     & \gamma^n & \mapsto     & a(\gamma^n)-b(\gamma^n)
\end{array}
\]
\end{map}

\begin{lemma}
For all $\gamma^n$ in $Q_n$ we have that 
\[
\begin{array}{ccc}
H.(\gamma^n,a)&=&(v_n(\gamma^n) - 1) \, (\gamma^n,a) \\
H.(\gamma^n,b)&=&(v_n(\gamma^n) + 1) \, (\gamma^n,b) 
\end{array}
\]
and for all $\gamma^{n-1}$ in $Q_{n-1}$ we have that
\[
\begin{array}{c}
H.D_{n-1}(\gamma^{n-1},e)=v_{n-1}(\gamma^{n-1}) \,
D_{n-1}(\gamma^{n-1},e) \, .
\end{array}
\] 
\end{lemma}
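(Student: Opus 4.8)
The plan is to compute the action of $H=(b,b)-(a,a)$ directly, using the explicit formula for the Lie action given in Theorem~\ref{formula}, namely
\[
(a,x).(\alpha^n,y)=\delta_{y,a}\cdot(\alpha^n,a)-\sum_{i=1}^n\delta_{x,a_i}\cdot(\alpha^n\diamantei x,y).
\]
First I would observe that since $Q$ is the two loops quiver, the only arrows are $a$ and $b$, so when we substitute one loop for another inside a path $\gamma^n=a_1\cdots a_n$ the path does not really change length-wise, only the arrows $a$ get turned into $b$ or vice versa; in particular, replacing $a_i$ by the \emph{same} arrow leaves $\gamma^n$ fixed. Concretely, for the generator $(a,a)$ acting on $(\gamma^n,a)$ we get $(\gamma^n,a)$ from the $\delta_{y,a}$ term, and from the sum we get $-\sum_{i}\delta_{a,a_i}(\gamma^n,a)=-a(\gamma^n)\,(\gamma^n,a)$, since $\gamma^n\diamantei a=\gamma^n$ whenever $a_i=a$. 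Hence $(a,a).(\gamma^n,a)=(1-a(\gamma^n))(\gamma^n,a)$. A symmetric computation gives $(b,b).(\gamma^n,a)=-b(\gamma^n)(\gamma^n,a)$ (here the $\delta_{y,b}$ term vanishes because $y=a\neq b$, and $\gamma^n\diamantei b$ with $a_i=b$ leaves $\gamma^n$ unchanged). Subtracting, $H.(\gamma^n,a)=\bigl(-b(\gamma^n)-1+a(\gamma^n)\bigr)(\gamma^n,a)=(v_n(\gamma^n)-1)(\gamma^n,a)$, which is the first formula. The second formula, $H.(\gamma^n,b)=(v_n(\gamma^n)+1)(\gamma^n,b)$, follows by the same computation with the roles of $a$ and $b$ interchanged, being careful that now the $\delta_{y,\cdot}$ contributions come with opposite sign.

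For the third formula I would use that $D_{n-1}$, coming from the differential in $C^\bullet(Q)$, is a morphism of $HH^1(A)$-modules — this is precisely the content of Lemma~\ref{diferencial} (the Gerstenhaber bracket commutes with $\delta$ up to sign, so $HH^1(A)$ acts on cocycles, coboundaries and cohomology compatibly). Thus it suffices to show that $D_{n-1}(\gamma^{n-1},e)$ is, as an element of $k(Q_n\parallel Q_1)$, a sum of eigenvectors $(\mu^n,y)$ all having $H$-eigenvalue $v_{n-1}(\gamma^{n-1})$. From the formula $D_{n-1}(\gamma^{n-1},e)=\sum_{c\in Q_1e}(c\gamma^{n-1},c)+(-1)^n\sum_{c\in eQ_1}(\gamma^{n-1}c,c)$, every summand is of the form $(c\gamma^{n-1},c)$ or $(\gamma^{n-1}c,c)$ with $c\in\{a,b\}$: if $c=a$ then by the first displayed formula its eigenvalue is $v_n(c\gamma^{n-1})-1=\bigl(v_{n-1}(\gamma^{n-1})+1\bigr)-1=v_{n-1}(\gamma^{n-1})$, and if $c=b$ then by the second formula it is $v_n(c\gamma^{n-1})+1=\bigl(v_{n-1}(\gamma^{n-1})-1\bigr)+1=v_{n-1}(\gamma^{n-1})$. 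Since all terms share the eigenvalue $v_{n-1}(\gamma^{n-1})$, so does $D_{n-1}(\gamma^{n-1},e)$.

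The only delicate point — and the one I would take care over — is bookkeeping of signs and of the degenerate substitutions $\alpha^n\diamantei x$ when $x=a_i$, because on the two loops quiver these coincide with the trivial substitution and must not be double-counted or dropped; getting the count $a(\gamma^n)$ versus $a(\gamma^n)\pm 1$ right in each of the four cases ($(a,a)$ or $(b,b)$ acting on $(\gamma^n,a)$ or $(\gamma^n,b)$) is where an error would creep in. Everything else is a direct substitution into the formula of Theorem~\ref{formula} together with the module-morphism property of $D_{n-1}$, so no further machinery is needed.
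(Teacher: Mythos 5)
Your proof is correct and follows essentially the same route as the paper, which simply applies the combinatorial formula of Theorem \ref{formula} to $H=(b,b)-(a,a)$, using that on the two loops quiver $\gamma^n \diamantei x=\gamma^n$ whenever $x=a_i$. The appeal to Lemma \ref{diferencial} for the third identity is superfluous (and as stated would also require observing that $\delta H=0$, since the differential of $C^\bullet(Q)$ kills the $k(Q_n \parallel Q_1)$ summand); your direct check that every summand $(c\gamma^{n-1},c)$ or $(\gamma^{n-1}c,c)$ of $D_{n-1}(\gamma^{n-1},e)$ is an $H$-eigenvector of eigenvalue $v_{n-1}(\gamma^{n-1})$ already suffices and amounts to the same computation the paper intends.
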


\begin{proof} 
Use the formula given in proposition (\ref{formula}).
\end{proof}

\begin{proposition} 
Assume that $char \, k =0$. 
\begin{enumerate}
\item Consider $H$ as an endomorphism of $k(Q_n \parallel Q_1)$.
The eigenvalues of $H$ are $n+1-2l$ where 
$l=0,\dots n+1 $. 
Denote by $W(\lambda)$ 
the eigenspace of $H$ of the eigenvalue $\lambda$.
We have that $$dim_k W(n+1-2l)=\cnk \, . $$
\item  Consider $H$ as an endomorphism of $Im \, D_{n-1}$
The eigenvalues of $H$ restricted to 
$Im \,  D_{n-1}$ are $n-1-2l$ where 
$l=0,\dots n-1 $. As above, denote by $W(\lambda)$ 
the eigenspace of $H$ of the eigenvalue $\lambda$. 
We have that $$dim_k W(n-1-2l)=\cnnk \, .$$
\end{enumerate}
\end{proposition}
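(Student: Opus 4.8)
The plan is to count, for each eigenvalue, a basis of eigenvectors, separately for the two situations. For part (i), the space $k(Q_n \parallel Q_1)$ has basis the pairs $(\gamma^n, a)$ and $(\gamma^n, b)$ with $\gamma^n$ ranging over $Q_n$ (every path of length $n$ is parallel to both loops, since the quiver has a single vertex). By the preceding lemma, $(\gamma^n,a)$ is an eigenvector with eigenvalue $v_n(\gamma^n)-1$ and $(\gamma^n,b)$ is an eigenvector with eigenvalue $v_n(\gamma^n)+1$, so $H$ is already diagonal in this basis and it remains to count multiplicities. First I would record that a path $\gamma^n$ with $a(\gamma^n) = n - l$ and $b(\gamma^n) = l$ (so $l = 0, \dots, n$) satisfies $v_n(\gamma^n) = n - 2l$, and there are exactly $\binom{n}{l}$ such paths. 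Hence the eigenvalue $n - 2l - 1$ is contributed $\binom{n}{l}$ times by the $(\gamma^n,a)$ with $b(\gamma^n)=l$, and $\binom{n}{l-1}$ times by the $(\gamma^n,b)$ with $b(\gamma^n)=l-1$ (which has $v_n = n-2(l-1) = n-2l+2$, so eigenvalue $n-2l+1$; re-indexing, the $(\gamma^n,b)$-part contributing to eigenvalue $n+1-2l$ comes from $b(\gamma^n) = l-1$). Writing the eigenvalue uniformly as $\lambda = n+1-2l$ for $l = 0, \dots, n+1$, the $(\gamma^n,a)$-summand contributes when $v_n(\gamma^n)-1 = n+1-2l$, i.e. $b(\gamma^n) = l-1$, giving $\binom{n}{l-1}$ paths, and the $(\gamma^n,b)$-summand contributes when $v_n(\gamma^n)+1 = n+1-2l$, i.e. $b(\gamma^n) = l$, giving $\binom{n}{l}$ paths. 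Adding, $\dim_k W(n+1-2l) = \binom{n}{l} + \binom{n}{l-1} = \binom{n+1}{l}$, which is exactly $\cnk$. The range $l = 0, \dots, n+1$ is forced because $b(\gamma^n)$ runs from $0$ to $n$ in each of the two summands and the two index windows are shifted by one.

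For part (ii), by the lemma $H.D_{n-1}(\gamma^{n-1},e) = v_{n-1}(\gamma^{n-1})\, D_{n-1}(\gamma^{n-1},e)$, so the images $D_{n-1}(\gamma^{n-1},e)$ are eigenvectors of $H$ on $\mathrm{Im}\, D_{n-1}$. Since the quoted theorem of Cibils asserts $D_{n-1}$ is injective for the two loops quiver ($Q$ is neither a loop nor an oriented cycle), the set $\{D_{n-1}(\gamma^{n-1},e) : \gamma^{n-1} \in Q_{n-1}\}$ is a basis of $\mathrm{Im}\, D_{n-1}$, and $H$ is diagonal in it. A path $\gamma^{n-1}$ with $b(\gamma^{n-1}) = l$, $l = 0,\dots, n-1$, has $v_{n-1}(\gamma^{n-1}) = (n-1) - 2l = n - 1 - 2l$, and there are $\binom{n-1}{l}$ such paths; so the eigenvalue $n-1-2l$ occurs with multiplicity $\binom{n-1}{l} = \cnnk$, and the eigenvalues are exactly $n-1-2l$ for $l = 0, \dots, n-1$. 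This finishes both parts.

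The only genuine input beyond bookkeeping is the injectivity of $D_{n-1}$, which is exactly what guarantees that the spanning set of $\mathrm{Im}\, D_{n-1}$ in part (ii) is in fact a basis; without it the multiplicity count in (ii) would only be an upper bound. Everything else is the combinatorics of counting length-$n$ words in two letters by the number of occurrences of one letter, together with the elementary Pascal identity $\binom{n}{l} + \binom{n}{l-1} = \binom{n+1}{l}$. I would present the argument as: (1) diagonalize $H$ using the previous lemma; (2) count the number of paths of given "content" $a(\gamma)-b(\gamma)$; (3) assemble the multiplicities, using injectivity of $D_{n-1}$ in case (ii) and the Pascal identity in case (i).
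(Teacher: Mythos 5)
Your proposal is correct and follows essentially the same route as the paper: diagonalize $H$ in the basis $\{(\gamma^n,a),(\gamma^n,b)\}$ (resp. $\{D_{n-1}(\gamma^{n-1},e)\}$), count paths by the number of occurrences of $b$, and combine the two shifted binomial counts via Pascal's identity. The only difference is cosmetic: you make explicit that injectivity of $D_{n-1}$ is what turns the spanning set of $\mathrm{Im}\,D_{n-1}$ into a basis (the paper uses this tacitly), and your eigenvalue labels follow the preceding lemma literally, which yields the same multiplicities.
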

\begin{proof}
$(i)$ From the above lemma, it is clear that the set
\[
\{ (\gamma^n,a) \, \mid \, \gamma^n \in Q_n \} \cup 
\{ (\gamma^n,b) \, \mid \, \gamma^n \in Q_n \}
\] 
is a basis of $k(Q_n \parallel Q_1)$ consisting of eigenvectors.
We also have that $(\gamma^n,a)$ and $(\gamma^n,b)$  
are eigenvectors of eigenvalue 
$v(\gamma^n) + 1$ and $v(\gamma^n) - 1$ respectively. 
Since $a(\gamma^n)+b(\gamma^n)=n$ for all paths $ \gamma^n$, 
we have that 
$v(\gamma^n)=n-2b(\gamma^n)$ where $b(\gamma^n)$ 
varies from $0$ to $n$. 
Then we have that $v(\gamma^n) \pm 1$ is of the form 
$n+1-2l(\gamma^n)$ where $l= 0 \dots, n+1 $.
Let us remark the following:
\begin{enumerate}
\item[$-$]  $(a^n,b)$ is the only eigenvector of value $n+1$
\item[$-$]  $(b^n,a)$ is the only eigenvector of value $-(n+1)$ 
\item[$-$]  If $0 < l < n+1 $, we have that 
\begin{itemize}
\item $(\gamma^n,a)$ is an eigenvector of eigenvalue $n+1-2l$ 
iff $l=b(\gamma^n)$
\item $(\gamma^n,b)$ is an eigenvector of eigenvalue $n+1-2l$ 
iff $l-1=b(\gamma^n)$
\end{itemize}
\end{enumerate}
On the other hand, if $0 < l < n+1$, we know that 
there are \scalebox{0.7}{$\left( \begin{array}{c} n \\ l \end{array}\right)$} 
paths $\gamma^n$ such that $b(\gamma^n)=l$ and 
\scalebox{0.7}{$\left( \begin{array}{c} n \\ l-1 \end{array}\right)$} 
paths $\gamma^n$ such that $b(\gamma^n)=l-1$. 
Therefore, there are 
\[
\scalebox{0.8}{$
\left( \begin{array}{c} n \\ l \end{array}\right) +
\left( \begin{array}{c} n \\ l-1 \end{array}\right) =
\left( \begin{array}{c} n+1 \\ l \end{array}\right)$}
\]
eigenvectors $(\gamma^n,x)$ of eigenvalue $n+1-2l$.\\
$(ii)$ From the above lemma, it is clear that the set 
\[
\{ D_{n-1}(\gamma^{n-1},e) \, \mid \, \gamma^{n-1} \in Q_{n-1} \} 
\] 
is a basis of $Im\, D_{n-1}$ consisting of eigenvectors.
We also have that $D_{n-1}(\gamma^{n-1},e)$ is an eigenvector of
eigenvalue $v(\gamma^{n-1})$. 
Since $a(\gamma^{n-1}) + b(\gamma^{n-1}) = n-1$ for all paths 
$\gamma^{n-1}$, we have that 
$v(\gamma^{n-1})=n-1-2b(\gamma^{n-1})$ where  $b(\gamma^n)$ 
varies from $0$ to $n-1$. Therefore the eignevalues are of the form 
$n-1-2l$ where $l$ varies from $0$ to $n-1$ and 
there are 
\scalebox{0.7}{$\left( \begin{array}{c} n-1 \\ l \end{array}\right)$} 
eigenvectors of eignevalue $n+1-2l$. 
\end{proof}

Recall the following result from Lie theory:
\begin{lemma}[General Multiplicty Formula \cite{bremner}]
Let $V$ a finite dimensional $sl_2\mathbb{C}$-module.  
For every integer $t$, 
let $V_t$ be the eigenspace of $H$ of eigenvalue $n$. 
Then for any nonnegative integer $t$, 
the indecomposable module 
the number of copies of $V(t)$ 
that appear in the decomposition into direct sum 
of indecomposable is 
$dim \, V_t \, - \, dim \, V_{t-2}$.
\end{lemma}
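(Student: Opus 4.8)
The plan is to deduce the general formula from the classification of irreducible $sl_2\mathbb{C}$-modules together with additivity of weight spaces under direct sums. By the two classical facts recalled just above, every finite-dimensional $V$ splits as $V = \bigoplus_{s \geq 0} V(s)^{q_s}$, and the multiplicities $q_s$ are what we must recover. Since $H$ acts diagonally and taking the $\lambda$-eigenspace of $H$ commutes with finite direct sums, the eigenspace $V_t$ of $H$ for the eigenvalue $t$ decomposes as $V_t = \bigoplus_{s \geq 0} \bigl(V(s)_t\bigr)^{q_s}$, so $\dim V_t = \sum_{s \geq 0} q_s \dim V(s)_t$. Thus the whole problem is reduced to knowing the eigenspace dimensions of each irreducible and then inverting the resulting linear relation.

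Next I would record the explicit weight structure of $V(s)$: it has a basis of $H$-eigenvectors forming a single string, with eigenvalues $s, s-2, \ldots, -s+2, -s$, each occurring exactly once. Hence $\dim V(s)_t = 1$ when $t \in \{s, s-2, \ldots, -s\}$ and $\dim V(s)_t = 0$ otherwise; equivalently, the eigenvalue $t$ occurs in $V(s)$ precisely when $s \geq |t|$ and $s \equiv t \pmod 2$. Substituting this into the formula for $\dim V_t$ expresses each $\dim V_t$ as a sum of the $q_s$ running over the relevant string of values of $s$ of the correct parity. To isolate a single multiplicity I would then take the difference of the dimensions of two consecutive eigenspaces along that string: the copies of $V(t)$ are exactly the strings that terminate at the extreme weight $t$, and these are detected as the kernel of the connecting operator ($E$ or $F$) between adjacent eigenspaces, i.e. as highest- (respectively lowest-) weight vectors of eigenvalue $t$. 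A rank--nullity count for this operator, combined with its surjectivity onto the adjacent eigenspace, then gives the number of copies of $V(t)$ as the difference of eigenspace dimensions $\dim V_t - \dim V_{t-2}$, as asserted.

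The step I expect to be the main obstacle is not the mechanism but the bookkeeping: pinning down the orientation of the weight string and the sign convention for $H$ consistently with the rest of the paper, so that the correct one of the two neighbouring eigenspaces is subtracted, and then disposing of the boundary cases. Concretely, one must check that the connecting operator genuinely maps the eigenspace onto the adjacent one except at the extreme weight, so that its kernel has dimension exactly the number of strings ending at $t$; one must also handle the extreme weights (where a nontrivial kernel appears) and the eigenvalues near $0$ separately, together with the two parity classes. Once the orientation is fixed in line with the eigenvalue ordering used in the preceding proposition and the surjectivity of the connecting operator onto the adjacent eigenspace is verified, the rank--nullity identity yields the stated formula, and additivity of the eigenspace decomposition over $V = \bigoplus_{s} V(s)^{q_s}$ completes the proof.
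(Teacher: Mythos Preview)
The paper does not give its own proof of this lemma: it is stated as a classical Lie-theory fact and attributed to \cite{bremner}, with no argument supplied. So there is nothing to compare your proposal against on the paper's side.

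Your sketch is the standard argument and is correct in outline. One remark: the telescoping you describe actually yields $q_t=\dim V_t-\dim V_{t+2}$ for $t\geq 0$, since $\dim V_t=\sum_{s\geq t,\ s\equiv t\ (2)} q_s$; this is also what the paper tacitly uses in the subsequent lemma (e.g.\ $p(n+1,l)=\binom{n+1}{l}-\binom{n+1}{l-1}$ comes from subtracting the eigenspace of eigenvalue $n+3-2l$, not $n-1-2l$). The statement as printed has ``$V_{t-2}$'' and the stray ``eigenvalue $n$'', which appear to be typos; your caveat about fixing the orientation is exactly the point, and once you commit to highest-weight vectors and the surjectivity of $E:V_t\to V_{t+2}$ your rank--nullity step goes through without difficulty.
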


A  consequence of the above lemma is the following result:
\begin{lemma}
Let $\mathbb{C}$ be the field of complex numbers, 
$Q$ the quiver given by two loops and 
$A:=\mathbb{C}Q/<Q_2>$. 
For $n \geq 1$, we denote by $h(n)$ the following: 
\[
h(n):= max \, \{ \, l \,\mid \, n+1-2l \geq 0 \, \} 
\] 
and for $l=0, \dots, h(n)$ we denote by $p(n,l)$ the following:
\[
 \scalebox{0.9}{$
p(n,l):=\begin{cases} \cck  & \text{ if $l = 0$} \\
                      \, & \, \\
                      \pnl  & \text{ if $l \geq 1$}

\end{cases}
$}
\]
Then we have that
\begin{enumerate}
\item the decomposition into direct sum  of 
irreducibles of  $\mathbb{C}(Q_n \parallel Q_1)$ 
as $sl_2(\mathbb{C})$ Lie module is given by  
\[
\displaystyle{\mathbb{C}(Q_n \parallel Q_1) 
\cong \bigoplus_{l=0}^{h(n)} V(n+1-2l)^{p(n+1,l)}\, , }
\]
\item the decomposition into direct sum  of 
irreducibles of  $Im \, D_{n-1}$ as $sl_2(\mathbb{C})$ Lie module 
is given by  
\[
\displaystyle{ Im \, D_{n-1} 
\cong \bigoplus_{l=0}^{h(n)-1} V(n-1-2k)^{p(n-1,l)}  \, .}
\]
\end{enumerate}
\end{lemma}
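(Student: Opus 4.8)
The plan is to read both decompositions directly off the two facts just recalled: the computation of the $H$-eigenspaces on $\mathbb{C}(Q_n\parallel Q_1)$ and on $\mathrm{Im}\,D_{n-1}$, and the General Multiplicity Formula. As a preliminary I would record that $\mathbb{C}(Q_n\parallel Q_1)$ is a finite dimensional $sl_2(\mathbb{C})$-module: by Theorem~\ref{formula} (via Corollary~\ref{resultado}) the action of $HH^1(A)=k(Q_1\parallel Q_1)$ through $[\,-\,,\,-\,]_Q$ carries $k(Q_n\parallel Q_1)$ into itself, so restricting along the inclusion $sl_2(\mathbb{C})\hookrightarrow HH^1(A)$ makes $\mathbb{C}(Q_n\parallel Q_1)$ an $sl_2(\mathbb{C})$-module; and $\mathrm{Im}\,D_{n-1}$ is the $k(Q_n\parallel Q_1)$-component of the coboundaries $\mathrm{Im}\,\delta^{n-1}$, which is stable under bracketing by the $1$-cocycles representing $HH^1(A)$ (Lemma~\ref{diferencial}), hence a submodule. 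By complete reducibility of finite dimensional $sl_2(\mathbb{C})$-modules, each of the two is isomorphic to $\bigoplus_{t\ge 0}V(t)^{q_t}$ for uniquely determined multiplicities $q_t$, and it remains to identify the $q_t$.

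For part~(i) I would apply the General Multiplicity Formula to $M=\mathbb{C}(Q_n\parallel Q_1)$: writing $W(\lambda)$ for the $H$-eigenspace of eigenvalue $\lambda$, the number of copies of $V(t)$ in $M$ (for $t\ge 0$) is the number of independent highest weight vectors of weight $t$, namely $\dim W(t)-\dim W(t+2)$. By the preceding proposition, $\dim W(n+1-2l)=\binom{n+1}{l}$ for $0\le l\le n+1$ while $\dim W(\lambda)=0$ for every other $\lambda$, and $n+1-2l\ge 0$ exactly when $0\le l\le h(n)$, where $h(n)=\lfloor (n+1)/2\rfloor$. Hence for $0\le l\le h(n)$,
\[
q_{\,n+1-2l}=\dim W(n+1-2l)-\dim W\bigl(n+1-2(l-1)\bigr)=\binom{n+1}{l}-\binom{n+1}{l-1},
\]
with the convention $\binom{n+1}{-1}=0$, which is precisely $p(n+1,l)$ (the second term vanishing when $l=0$, so $q_{\,n+1}=\binom{n+1}{0}=p(n+1,0)$). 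No $V(t)$ with $t$ outside $\{\,n+1-2l:0\le l\le h(n)\,\}$ occurs: every $H$-eigenvalue on $M$ is $\equiv n+1\pmod 2$ and lies in $[-(n+1),n+1]$, and the eigenvectors of negative eigenvalue are exactly absorbed by the negative weights of the $V(t)$ already found, which is what the multiplicity formula records. Summing over $l$ gives $\mathbb{C}(Q_n\parallel Q_1)\cong\bigoplus_{l=0}^{h(n)}V(n+1-2l)^{p(n+1,l)}$.

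For part~(ii) I would run the same argument with $M=\mathrm{Im}\,D_{n-1}$, using the second assertion of the preceding proposition: $\dim W(n-1-2l)=\binom{n-1}{l}$ for $0\le l\le n-1$, and $n-1-2l\ge 0$ exactly when $0\le l\le h(n)-1$ (note $h(n)-1=\lfloor (n-1)/2\rfloor$). For such $l$ the multiplicity of $V(n-1-2l)$ is $\binom{n-1}{l}-\binom{n-1}{l-1}=p(n-1,l)$, and the same parity-and-range remark leaves no room for other irreducibles, so $\mathrm{Im}\,D_{n-1}\cong\bigoplus_{l=0}^{h(n)-1}V(n-1-2l)^{p(n-1,l)}$.

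I do not expect any genuine obstacle: all the substance is already in the two quoted results, and what remains is bookkeeping — matching the index range $0\le l\le h(n)$ (resp.\ $0\le l\le h(n)-1$) with the condition that the weight $n+1-2l$ (resp.\ $n-1-2l$) be nonnegative, and noting $p(n,l)=\binom{n}{l}-\binom{n}{l-1}\ge 0$ by unimodality of the binomial coefficients (in any case forced, since it is a multiplicity). A convenient check afterwards is that the dimensions add up: $\sum_{l=0}^{h(n)}p(n+1,l)\,(n+2-2l)=\dim_{\mathbb{C}}\mathbb{C}(Q_n\parallel Q_1)$ and $\sum_{l=0}^{h(n)-1}p(n-1,l)\,(n-2l)=\dim_{\mathbb{C}}\mathrm{Im}\,D_{n-1}$.
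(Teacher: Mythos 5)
Your proposal is correct and follows essentially the same route as the paper, which derives the lemma directly from the eigenspace computation for $H$ together with the General Multiplicity Formula, the only content being the bookkeeping $\binom{n+1}{l}-\binom{n+1}{l-1}=p(n+1,l)$ (resp.\ $\binom{n-1}{l}-\binom{n-1}{l-1}=p(n-1,l)$). You even use the correct form $\dim W(t)-\dim W(t+2)$ of the multiplicity formula (the paper's statement has an evident typo with $t-2$), and your preliminary check that $\mathbb{C}(Q_n\parallel Q_1)$ and $\mathrm{Im}\,D_{n-1}$ are $sl_2(\mathbb{C})$-submodules is a point the paper leaves implicit.
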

\begin{proposition}
Let $\mathbb{C}$ be the field of complex numbers, 
$Q$ the quiver given by two loops and 
$A:=\mathbb{C}Q/<Q_2>$. 
For $n\geq 1$ and $l=0, \dots, h(n)$ 
we denote by $q(n,l)$ the following:
\[
\scalebox{0.9}{$
q(n,l):=\begin{cases} \mnz  & \text{ if $l = 0,1$} \\
                      \, & \, \\
                      \qnk  & \text{ if $l \geq 2$}

\end{cases}
$}
\]
Then, the decomposition 
of $HH^n(A)$ into a direct sum of irreducible Lie modules
over $sl_2(\mathbb{C})$ is given by  
\[
\displaystyle{HH^n(A) \cong \bigoplus_{l=0}^{h(n)} V(n+1-2l)^{q(n,l)}  }.
\]
\end{proposition}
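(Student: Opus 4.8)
The plan is to realize $HH^n(A)$ as a quotient of $sl_2(\mathbb{C})$-modules whose two constituents have already been decomposed, and then to read off $q(n,l)$ by subtracting multiplicities.

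First I would invoke the theorem of Cibils quoted above: for $n\geq 1$ one has $HH^n(A)\cong \mathbb{C}(Q_n\parallel Q_1)/Im\, D_{n-1}$, where $D_{n-1}\colon\mathbb{C}(Q_{n-1}\parallel Q_0)\to\mathbb{C}(Q_n\parallel Q_1)$ is injective because the two loops quiver is neither a loop nor an oriented cycle. The crucial point to establish is that this is an isomorphism of modules over the copy of $sl_2(\mathbb{C})$ inside $HH^1(A)$ spanned by $H$, $E$, $F$. For this I would note that every $f\in HH^1(A)=\mathbb{C}(Q_1\parallel Q_1)$ is a cocycle of the complex $C^\bullet(Q)$, since its differential annihilates the summand $\mathbb{C}(Q_1\parallel Q_1)$ of $C^1(Q)$; then the graded Leibniz identity of Lemma \ref{diferencial}, transported to $C^\bullet(Q)$ through Cibils' isomorphism, shows that $[f,-]_Q$ commutes with the differential of $C^\bullet(Q)$, hence preserves $Im\, D_{n-1}$ (which is the image of that differential inside the summand $\mathbb{C}(Q_n\parallel Q_1)$ of $C^n(Q)$), while Theorem \ref{formula} guarantees that $[f,-]_Q$ maps $\mathbb{C}(Q_n\parallel Q_1)$ into itself. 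Thus $Im\, D_{n-1}$ is an $sl_2(\mathbb{C})$-submodule of $\mathbb{C}(Q_n\parallel Q_1)$ and $HH^n(A)$ carries the quotient structure; since finite-dimensional $sl_2(\mathbb{C})$-modules are semisimple (as recalled above), this yields a splitting $\mathbb{C}(Q_n\parallel Q_1)\cong Im\, D_{n-1}\oplus HH^n(A)$ of $sl_2(\mathbb{C})$-modules.

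Next I would extract multiplicities from the preceding lemma: the multiplicity of $V(n+1-2l)$ in $\mathbb{C}(Q_n\parallel Q_1)$ is $p(n+1,l)$, and that of $V(n-1-2l)$ in $Im\, D_{n-1}$ is $p(n-1,l)$; rewriting $V(n-1-2l)=V(n+1-2(l+1))$ then shows that $V(n+1-2l)$ appears in $Im\, D_{n-1}$ with multiplicity $p(n-1,l-1)$ when $l\geq 1$ and with multiplicity $0$ when $l=0$. Since $HH^n(A)$ is a subquotient of $\mathbb{C}(Q_n\parallel Q_1)$ it can only involve the $V(n+1-2l)$ with $0\leq l\leq h(n)$, and from the splitting the multiplicity $q(n,l)$ of $V(n+1-2l)$ in $HH^n(A)$ is $p(n+1,0)$ for $l=0$ and $p(n+1,l)-p(n-1,l-1)$ for $l\geq 1$.

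Finally I would check that this matches the closed formula in the statement, which is the only computational point. Substituting $p(m,0)=1$ and $p(m,j)=\binom{m}{j}-\binom{m}{j-1}$ for $j\geq 1$, one gets $q(n,0)=1=\binom{n-1}{0}$ and $q(n,1)=p(n+1,1)-p(n-1,0)=n-1=\binom{n-1}{1}$, which are $\mnz$ evaluated at $l=0$ and $l=1$, while for $l\geq 2$ expanding $p(n+1,l)-p(n-1,l-1)$ produces $\binom{n+1}{l}-\binom{n+1}{l-1}-\binom{n-1}{l-1}+\binom{n-1}{l-2}=\qnk$; hence $q(n,l)$ agrees with its definition in the statement and $HH^n(A)\cong\bigoplus_{l=0}^{h(n)}V(n+1-2l)^{q(n,l)}$. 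The main obstacle is really just the first step, namely the $sl_2(\mathbb{C})$-equivariance of $D_{n-1}$ (equivalently, the stability of $Im\, D_{n-1}$) combined with complete reducibility; after that everything is routine bookkeeping with binomial coefficients, the only genuine edge cases being $l=0$ and $l=1$.
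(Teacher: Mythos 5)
Your argument is correct and is essentially the paper's own (implicit) proof of this proposition: realize $HH^n(A)$ as the quotient of $\mathbb{C}(Q_n \parallel Q_1)$ by the $sl_2(\mathbb{C})$-stable subspace $Im\, D_{n-1}$, invoke the preceding lemma's decompositions of these two spaces, and subtract multiplicities after the index shift $V(n-1-2l)=V(n+1-2(l+1))$, the binomial bookkeeping reproducing $q(n,l)$ exactly; your explicit verification that $Im\, D_{n-1}$ is an $sl_2(\mathbb{C})$-submodule (degree-one elements are cocycles of $C^\bullet(Q)$, so the graded Leibniz rule makes their bracket preserve coboundaries) usefully fills a point the paper leaves tacit, since the paper only records $H$-stability. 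The one caveat, inherited from the statement itself rather than introduced by you, is the case $n=1$: there $D_0=0$ is not injective, so both your appeal to Cibils' injectivity and the subtraction of a copy of $V(0)$ fail, and indeed $HH^1(A)\cong V(2)\oplus V(0)$ rather than $V(2)$; the formula as written is valid for $n\geq 2$.
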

\begin{algorithm}
There is an algorithm that give us the decomposition of $HH^n(A)$
described in the above proposition. We will explain it in the next paragraph. 
We use the following table to write such decomposition:
\[
\scalebox{0.9}{$
\begin{array}{c||ccccccccc}
n & 
V(0)&V(1)&V(2)&V(3)&V(4)&V(5)&V(6)&V(7)&\cdots \\
\hline \hline 
\, & 
\,  & \,  & \, & \,  & \, & \, & \, & \, & \, \\
HH^2(A) & 
\,  & 1  & \, & 1  & \, & \, & \, & \, & \, \\ 
\rotatebox{90}{$\cdots$} & 
\,  & \,  & \, & \,  & \, & \, & \, & \,& \, \\
HH^n(A) & 
q_0  & q_1  & q_2 & q_3  & q_4 & q_5 & q_6 & q_7 & \cdots \\ 
\, & 
\,  & \,  & \, & \,  & \, & \, & \, & \,& \, \\
\end{array}
$}
\]
In the above table, 
at the row $HH^n(A)$, 
the number that appears in the column $V(t)$ 
states the number of copies of the irreducibble module 
$V(t)$ that appears in the decomposition
of $HH^n(A)$. We leave a blank space if no $V(t)$ appears in the
decomposition of $HH^n(A)$. 
We fix the first row of the table 
with the decomposition of $HH^2(A)$. 
Now, given the entries of the row $HH^n(A)$, we can fill out 
the coefficients of the next row, this is for $HH^{n+1}(A)$, 
in the following manner:
\begin{enumerate}
\item Add an imaginary column $(-)$ just before the column $V(0)$, 
consisting of zeros. 
\item Write down the coefficients of the next row by using 
  the rule from Pascal's triangle: add the number directly
  above and to the left with the number directly above and to the right. 
\end{enumerate}
\[
\scalebox{0.7}{
$
\xymatrix{
\,  \ar@{-}@<+8ex>[ddd]\ar@{-}@<+8.5ex>[ddd] \ar@{-}@<-3.5ex>[rrrrrrrr] 
\ar@{-}@<-3ex>[rrrrrrrr]     
& (-) & V(0)& V(1)&\cdots & V(t-1) & V(t) & V(t+1) & \cdots \\
HH^n(A)   &  0 \ar@{-}[rd] & q_0 & q_1 \ar@{-}[ld] & \cdots 
& q_{t-1} \ar@{-}[rd] & q_t & q_{t+1} \ar@{-}[ld] & \cdots \\
HH^{n+1}(A)&  0  & q_1 & \cdots & \cdots 
& \cdots & q_{t-1}+q_{t+1} & \cdots & \cdots \\
\, & \, & \, & \, &\, & \, & \, & \, &
}
$}
\]
Let us remark that 
the number of copies of $V(1)$ that appear 
in the decomposition of $HH^{n}(A)$ is equal to the number of copies 
of $V(0)$ that appear in the decomposition of $HH^{n+1}(A)$. 
\end{algorithm}
\begin{lemma}We have that 
\begin{enumerate}
\item If $n$ is even then $q(n,h(n))= q(n+1,h(n+1))$. 
\item If $n \geq 2$ then $q(n,l)+q(n,l+1)=q(n+1,l+1)$.
\end{enumerate}
\end{lemma}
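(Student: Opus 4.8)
The plan is to prove both identities by a purely combinatorial manipulation of binomial coefficients, using nothing beyond Pascal's rule $\binom{m}{j}+\binom{m}{j+1}=\binom{m+1}{j+1}$ and the row symmetry $\binom{m}{j}=\binom{m}{m-j}$. No further representation theory is needed, because the eigenspace dimensions of $H$ and hence the multiplicities $q(n,l)$ are already in closed form from the preceding proposition; the statement is exactly that the table $(q(n,l))$ obeys the augmented Pascal recursion of the Algorithm, together with the fact that the last nonzero entry of each even row is reproduced in the row below.

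For part (ii) I would argue by cases following the piecewise definition of $q(n,l)$. For $l=0$ the left side is $\binom{n-1}{0}+\binom{n-1}{1}=\binom{n}{1}=q(n+1,1)$. For $l=1$ I would first expand the four-term formula $\qnk$ at $l=2$ to get $q(n,2)=\binom{n-1}{2}$, whence $q(n,1)+q(n,2)=\binom{n-1}{1}+\binom{n-1}{2}=\binom{n}{2}=q(n+1,2)$. For $l\ge 2$ all three of $q(n,l)$, $q(n,l+1)$ and $q(n+1,l+1)$ are given by the four-term formula with the appropriate index shifts, and the identity becomes a formal cancellation: in $q(n,l)+q(n,l+1)$ the two occurrences $\pm\binom{n+1}{l}$ cancel and the two occurrences $\mp\binom{n-1}{l-1}$ cancel, leaving $\binom{n+1}{l+1}-\binom{n+1}{l-1}-\binom{n-1}{l}+\binom{n-1}{l-2}$; applying Pascal's rule to each of the four binomials in $q(n+1,l+1)=\binom{n+2}{l+1}-\binom{n+2}{l}-\binom{n}{l}+\binom{n}{l-1}$ yields precisely this expression.

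For part (i) I would note that when $n$ is even, $h(n)=n/2$ and $h(n+1)=n/2+1=h(n)+1$. Applying part (ii) with $l=h(n)$,
\[
q(n,h(n))+q(n,h(n)+1)=q(n+1,h(n)+1)=q(n+1,h(n+1)),
\]
so it suffices to show the boundary multiplicity $q(n,h(n)+1)$ vanishes. Since $h(n)+1=n/2+1\ge 2$, this value is computed by the four-term formula, and the evenness of $n$ gives the symmetries $\binom{n+1}{n/2+1}=\binom{n+1}{n/2}$ and $\binom{n-1}{n/2}=\binom{n-1}{n/2-1}$, which make the four terms cancel in pairs; hence $q(n,h(n)+1)=0$ and the equality follows.

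The hard part is not any single computation but the bookkeeping: keeping straight which branch of the piecewise definition is in force (the switch occurs between $l=1$ and $l=2$, so in part (ii) one must check that $l+1\ge 2$ when invoking the four-term formula, and in part (i) that $h(n)+1\ge 2$), and isolating the ``vanishing at the boundary'' fact $q(n,h(n)+1)=0$, which is exactly what links the two parts together. Once these are set up, everything reduces to routine Pascal-triangle arithmetic.
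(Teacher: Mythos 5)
Your proof is correct, and for part (ii) it is the same argument the paper gives: direct verification in the low branches $l=0,1$ of the piecewise definition, and Pascal's rule applied to the four-term expression for $l\geq 2$ (your explicit cancellation is just the worked-out version of the paper's one-line appeal to $\binom{n}{l}+\binom{n}{l+1}=\binom{n+1}{l+1}$). Where you diverge is part (i): the paper treats it independently of (ii), checking $n=2$ and $n=4$ by hand and invoking the central symmetry $\binom{n+1}{n/2}=\binom{n+1}{n/2+1}$ for $n\geq 6$, whereas you deduce (i) from (ii) evaluated at $l=h(n)$ together with the boundary vanishing $q(n,h(n)+1)=0$, which you prove by the same symmetry (plus $\binom{n-1}{n/2}=\binom{n-1}{n/2-1}$). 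Your route handles all even $n\geq 2$ uniformly, avoids the ad hoc small-case computations, and has the added virtue of making explicit the fact that the formula $q(n,l)$ vanishes just beyond $l=h(n)$ -- exactly the ``imaginary column of zeros'' that justifies the algorithm's Pascal-type rule at the left edge of the table; the paper's version, by contrast, keeps (i) and (ii) logically independent. One bookkeeping point you handle correctly but should keep visible: applying (ii) at $l=h(n)$ uses the identity for the \emph{formulas} $q(n,l)$ outside the range $l\leq h(n)$ in which they are interpreted as multiplicities, which is legitimate precisely because your proof of (ii) is purely formal in the binomial coefficients (with the convention that out-of-range binomials vanish).
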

\begin{proof}
For the first equality, we verify by a direct computation for $n=2$
and $n=4$. For $n \geq 6$, we use that if $n$ is even then we have that 
\[
\left( \begin{array}{c} n+1 \\ n/2 \end{array} \right) =
\left( \begin{array}{c} n+1 \\ n/2 + 1 \end{array} \right) .
\]
For the second equality, we verify by a direct computation for $l=0$
and $l=1$. For $l \geq 2$, we use the Pascal triangle's rule:
\[
\left( \begin{array}{c} n \\ l \end{array} \right) +
\left( \begin{array}{c} n \\ l + 1 \end{array} \right) =
\left( \begin{array}{c} n+1 \\ l + 1 \end{array} \right) .
\]
\end{proof}
\begin{remark} The algorithm is justify by the above lemma. 
Moreover, we have that  
\[
\scalebox{0.9}
{$q(n,2)=\left( \begin{array}{c} n-1 \\ 2  \end{array}\right)$}. 
\]
This is the reason why we have a section of the Pascal triangle in the
above table.
\end{remark}

Finally, once we have the decompostion of $HH^n(A)$ into direct sum of
irreducible modules over $sl_2 \mathbb{C}$, we return to study
$HH^n(A)$ as a $HH^1(A)$-module.
\begin{corollary}
We have that 
\[
\displaystyle{HH^n(A) \cong \bigoplus_{l=0}^{h(n)} V(n+1-2l)^{q(n,l)} 
\otimes \mathbb{C}}
\]
as Lie modules over $HH^1(A)$.
\end{corollary}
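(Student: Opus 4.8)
The plan is to leverage the $sl_2(\mathbb{C})$-decomposition just obtained and upgrade it to a decomposition as a module over $HH^1(A)$, using the splitting $HH^1(A) \cong sl_2(\mathbb{C}) \times \mathbb{C}$ from the preceding proposition, where the abelian factor is the line spanned by $I:=(a,a)+(b,b)$. Since the two factors commute, every finite-dimensional $HH^1(A)$-module is the same datum as an $sl_2(\mathbb{C})$-module together with a commuting endomorphism induced by $I$, and an irreducible such module is necessarily $V(t)$ with $I$ acting by a scalar; this is what we denote $V(t)\otimes\mathbb{C}$. So the only real work left is to pin down the scalar by which $I$ acts on $HH^n(A)$.

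To compute that scalar I would apply the bracket formula of Theorem \ref{formula} to a basis vector $(\alpha^n,y)$ of $k(Q_n\parallel Q_1)$, writing $\alpha^n=a_1\cdots a_n$ with $y\in\{a,b\}$. Taking $x=a$, the crucial observation is that $\alpha^n\diamantei a=\alpha^n$ whenever $a_i=a$, so the subtracted sum collapses to $a(\alpha^n)\,(\alpha^n,y)$, giving $(a,a).(\alpha^n,y)=(\delta_{y,a}-a(\alpha^n))(\alpha^n,y)$, and symmetrically $(b,b).(\alpha^n,y)=(\delta_{y,b}-b(\alpha^n))(\alpha^n,y)$. Adding these and using $\delta_{y,a}+\delta_{y,b}=1$ (as $y$ is an arrow) together with $a(\alpha^n)+b(\alpha^n)=n$, I expect to get $I.(\alpha^n,y)=(1-n)(\alpha^n,y)$. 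Hence $I$ acts as the single scalar $1-n$ on all of $k(Q_n\parallel Q_1)$, and therefore on the quotient $HH^n(A)\cong k(Q_n\parallel Q_1)/Im\,D_{n-1}$, whose $HH^1(A)$-module structure comes from the reduced bracket via Corollary \ref{resultado}.

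To finish, I would observe that because $I$ acts by one and the same scalar, it preserves every $sl_2(\mathbb{C})$-isotypic component of $HH^n(A)$; consequently each irreducible $sl_2(\mathbb{C})$-summand $V(n+1-2l)$ appearing in the preceding proposition is automatically an irreducible $HH^1(A)$-submodule, namely $V(n+1-2l)\otimes\mathbb{C}$ with $I$ acting by $1-n$. The $sl_2(\mathbb{C})$-decomposition then lifts verbatim to the asserted $HH^1(A)$-decomposition. I do not anticipate a serious obstacle; the one place to be careful is the combinatorial bookkeeping in the scalar computation, specifically recognizing that replacing an occurrence of $a$ in $\alpha^n$ by $a$ again leaves $\alpha^n$ unchanged, which is exactly what makes the sum telescope to the multiplicity $a(\alpha^n)$.
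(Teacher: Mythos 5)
Your proposal is correct and follows essentially the same route as the paper: the paper's proof is exactly the one-line computation that $I.(\gamma^n,x)=(1-a(\gamma^n)-b(\gamma^n))(\gamma^n,x)=(1-n)(\gamma^n,x)$, with the lifting of the $sl_2(\mathbb{C})$-decomposition to an $HH^1(A)$-decomposition left implicit. You simply spell out that standard Lie-theoretic step in more detail, which is fine.
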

\begin{proof}
Notice that 
$$I.(\gamma^n,x)=(1-a(\gamma^n)-b(\gamma^n))(\gamma^n,x)=(1-n)(\gamma^n,x).$$
\end{proof}

\bibliography{bibliographie}
\bibliographystyle{amsalpha}

\end{document}